\newcommand{\comments}[1]{}
\newcommand{\beq}{\begin{equation}}
\newcommand{\eeq}{\end{equation}}
\newcommand{\beqa}{\begin{eqnarray}}
\newcommand{\eeqa}{\end{eqnarray}}
\newcommand{\beqas}{\begin{eqnarray*}}
\newcommand{\eeqas}{\end{eqnarray*}}
\newcommand{\bi}{\begin{itemize}}
\newcommand{\ei}{\end{itemize}}
\newcommand{\ba}{\begin{array}}
\newcommand{\ea}{\end{array}}
\newcommand{\nn}{\nonumber}
\def\eqnok#1{(\ref{#1})}
\def\vgap{\vspace*{.1in}}
\def\exp{{\rm exp}}
\def\prob{{\rm Prob}}
\newcommand{\bbe}{\mathbb{E}}
\def\prob{\mathop{\rm Prob}}
\def\Prob{{\hbox{\rm Prob}}}
\newcommand{\bbr}{\mathbb{R}}
\def\w{\omega}
\def\SFO{{\cal SFO}}
\def\SZO{{\cal SZO}}
\def\sigmasco{{\sigma}}
\def\cB{{\cal B}}
\def\cD{{\cal D}}
\def\cE{{\bbr^n}}
\title{
Stochastic First- and Zeroth-order Methods\\
for Nonconvex Stochastic Programming
\thanks{The first author was partially supported by
    NSF Grant CMMI-1000347 and the second author was partially supported
    by NSF grant CMMI-1000347,
    ONR grant N00014-13-1-0036 and
    NSF CAREER Award CMMI-1254446.}
}
\date{June 9, 2012}
\author{
    Saeed Ghadimi
    \thanks{
    Department of Industrial and Systems Engineering,
    University of Florida, Gainesville, FL 32611,
       (email: {\tt sghadimi@ufl.edu}).}
\and
    Guanghui Lan
    \thanks{Department of Industrial and Systems Engineering,
    University of Florida, Gainesville, FL 32611,
       (email: {\tt glan@ise.ufl.edu}). }
}
\begin{document}

\maketitle

\begin{abstract}
In this paper, we introduce a new stochastic approximation (SA) type algorithm, namely
the randomized stochastic gradient (RSG) method, for solving an important class of
nonlinear (possibly nonconvex) stochastic programming (SP) problems.
We establish the complexity of this method for computing an approximate stationary point
of a nonlinear programming problem. We also show that this method possesses
a nearly optimal rate of convergence if the problem is convex. We discuss a variant of the algorithm
which consists of applying a post-optimization phase to evaluate a short list of solutions
generated by several independent runs of the RSG method, and show that such modification allows to improve significantly
the large-deviation properties of the algorithm.
These methods are then specialized for solving a class of simulation-based
optimization problems in which only stochastic zeroth-order information is available.

\vspace{.1in}

\noindent {\bf Keywords:} stochastic approximation, nonconvex optimization, stochastic programming, simulation-based optimization

\end{abstract}




\setcounter{equation}{0}
\section{Introduction} \label{sec_intro}
In 1951, Robbins and Monro in their
seminal work \cite{RobMon51-1} proposed a classical stochastic
approximation (SA) algorithm for solving stochastic programming (SP) problems.
This approach mimics the simplest gradient descent
method by using noisy gradient information in place
of the exact gradients, and possesses the ``asymptotically optimal'' rate of convergence
for solving a class of strongly convex SP problems \cite{chu54,sac58}. However, it is usually
difficult to implement the ``asymptotically optimal'' stepsize
policy, especially in the beginning, so that the algorithms often
perform poorly in practice (e.g., \cite[Section 4.5.3]{Spall03}).
An important improvement of the classical SA was developed by Polyak \cite{pol90} and Polyak and Juditsky
\cite{pol92}, where longer stepsizes were suggested together with the averaging
of the obtained iterates. Their methods were shown to be more robust
with respect to the selection of stepsizes than the classical SA and also exhibit the ``asymptotically
optimal'' rate of convergence for solving strongly convex SP problems.
We refer to \cite{NJLS09-1} for an account of the earlier
history of SA methods.

The last few years have seen some significant progress for the development of SA methods for SP.
On one hand, new SA type methods are being introduced to solve SP problems which
are not necessarily strongly convex. On the other hand, these developments, motivated by complexity theory
in convex optimization~\cite{nemyud:83},
concerned the convergence properties of SA methods during a finite number of iterations.
For example, Nemirovski et al. \cite{NJLS09-1} presented a properly modified SA approach,
namely, mirror descent SA for solving general non-smooth convex SP problems.
They demonstrated that the mirror descent SA exhibits an
optimal ${\cal O} ( 1 /\epsilon^2)$ iteration complexity for solving
these problems. This method has been shown in  \cite{lns11,NJLS09-1} to be competitive to
the widely-accepted  sample average approximation approach
(see, e.g., \cite{ksh,sha03}) and even significantly
outperform it for solving a class of convex SP problems.
Similar techniques, based on
subgradient averaging, have been proposed in \cite{JNTV05-1,JRT08-1,Nest06-2}.
While these techniques dealt with non-smooth convex programming problems, Lan~\cite{Lan10-3} presented a unified
optimal method for smooth, non-smooth and stochastic optimization,
which explicitly takes into account the smoothness of the objective function
(see also~\cite{GhaLan12-2a,GhaLan10-1b} for discussions about strong convexity).
However, note that convexity has played an important role in establishing
the convergence of all these SA algorithms. To the best of our knowledge,
none of existing SA algorithms can handle more general SP
problems whose objective function is possibly nonconvex.

This paper focuses on the theoretical development of SA type methods for solving an important class of
nonconvex SP problems. More specifically, we study the classical unconstrained nonlinear programming (NLP) problem
given in the form of
(e.g.,~\cite{Nest04,NocWri99})
\beq \label{NLP}
f^* := \inf_{x \in \bbr^n} f(x),
\eeq
where $f:\bbr^n \to \bbr$ is a differentiable (not necessarily convex),
bounded from below, and its gradient $\nabla f(\cdot)$ satisfies
\[
\|\nabla f(y) - \nabla f(x)\| \le L\|y-x\|, \ \ \
\forall x, y \in \bbr^n.
\]
However, different from the standard NLP, we assume throughout the paper that
we only have access to noisy function values or gradients about the objective function $f$
in \eqnok{NLP}. In particular, in the basic setting, we assume that problem \eqnok{NLP} is to be solved by iterative algorithms
which acquire the gradients of $f$ via subsequent calls to a stochastic first-order
oracle ($\SFO$). At iteration $k$ of the algorithm, $x_k$ being the input,
the $\SFO$ outputs a {\sl stochastic gradient} $G(x_k, \xi_k)$, where
$\xi_k$, $k \ge 1$, are random variables whose distributions $P_k$ are supported on $\Xi_k \subseteq \bbr^d$.
The following assumptions are made for the Borel functions $G(x_k, \xi_k)$.

\vgap

{\bf A1:} For any $k \ge 1$, we have
\beqa
&\mbox{a)}& \, \, \bbe [G(x_k, \xi_k)] = \nabla f(x_k), \label{ass1.a} \\
&\mbox{b)} & \, \, \bbe \left[ \|G(x_k, \xi_k) - \nabla f(x_k)\|^2 \right] \le \sigma^2, \label{ass1.b}
\eeqa
for some parameter $\sigma \ge 0$.
Observe that, by \eqnok{ass1.a}, $G(x_k, \xi_k)$ is an unbiased estimator of $\nabla f(x_k)$
and, by \eqnok{ass1.b}, the variance of the random variable $\|G(x_k, \xi_k) - \nabla f(x_k)\|$ is bounded.
It is worth noting that in the standard setting for SP,
the random vectors $\xi_k$, $k = 1,2, \ldots$, are independent of each other 
(and also of $x_k$) (see, e.g., \cite{nemyud:83,NJLS09-1}). 
Our assumption here is slightly weaker since we do not need to assume  $\xi_k$, $k = 1,2, \ldots$, to be independent.


Our study on the aforementioned SP problems
has been motivated by a few interesting applications which are briefly
outlined as follows.
\begin{itemize}
\item In many {\sl machine learning} problems,
we intend to minimize a regularized loss function $f(\cdot)$ given by
\beq \label{case1}
 f(x) = \int_{\Xi} L(x,\xi) dP(\xi) + r(x),
\eeq
where either the loss function $L(x,\xi)$ or the regularization
$r(x)$ is nonconvex (see, e.g.,~\cite{Mairal09,MasBaxBarFre99}).
\item Another important class of problems originate from the
so-called {\sl endogenous uncertainty} in SP.
More specifically, the objective functions for these SP problems are given
in the form of
\beq \label{case2}
f(x) = \int_{\Xi(x)} F(x,\xi) dP_x(\xi),
\eeq
where the support $\Xi(x)$ and the distribution function $P_x$ of
the random vector $\xi$ depend on $x$. The function $f$
in \eqnok{case2} is usually nonconvex
even if $F(x, \xi)$ is convex with respect to $x$.
For example, if the support $\Xi$ does not depend on $x$,
it is often possible to represent $dP_x=H(x)dP$ for some fixed distribution $P$.
Typically this transformation results in a nonconvex integrand function.
Other techniques have also been developed to compute unbiased estimators for the gradient of $f(\cdot)$
in \eqnok{case2} (see, e.g., \cite{Fu06a,Glasserman91,LE90-1,RubSha93}).
\item
Finally, in {\sl simulation-based optimization},
the objective function is given by $f(x) = \bbe_\xi[F(x,\xi)]$, where
$F(\cdot, \xi)$ is not given explicitly, but through a black-box simulation
procedure (e.g., \cite{Andr98-1,Fu02-1}). Therefore,
we do not know if the function $f$ is convex or not.
Moreover, in these cases, we usually only have access to stochastic
zeroth-order information about the function values of $f(\cdot)$ rather than its gradients.
\end{itemize}

The complexity of the gradient descent method for solving problem~\eqnok{NLP} has been well-understood
under the deterministic setting (i.e., $\sigma = 0$ in \eqnok{ass1.b}).
In particular, Nesterov~\cite{Nest04} shows that after running the
method for at most $N = {\cal O}(1/\epsilon)$ steps, we have
$\min_{k=1, \ldots, N} \|\nabla f(x_k)\|^2 \le \epsilon$ (see Gratton et al.~\cite{GraSarToi08}
for a similar bound for the trust-region methods).
Cartis et al.~\cite{CarGouToi10-1} show that this bound is actually
tight for the gradient descent method. Note, however, that the analysis in \cite{Nest04}
is not applicable to the stochastic setting (i.e., $\sigma > 0$ in \eqnok{ass1.b}). Moreover, even if we have
$\min_{k=1, \ldots, N} \|\nabla f(x_k)\|^2 \le \epsilon$, to
find the best solution from $\{x_1, \ldots, x_N\}$ is still difficult
since $\|\nabla f(x_k)\|$ is not known exactly.
Our major contributions in this paper are summarized as follows. Firstly,
to solve the aforementioned nonconvex SP problem, we present a randomized stochastic gradient (RSG) method
by introducing the following modifications to the classical SA.
Instead of taking average of the iterates as in the mirror descent SA
for convex SP, we randomly select a solution $\bar x$ from
$\{x_1, \ldots, x_N\}$ according to a certain probability distribution as the output. We show that such a solution satisfies
$\bbe[\|\nabla f(\bar x)\|^2] \le \epsilon$ after running the method for at most $N = {\cal O}(1/\epsilon^2)$
iterations~\footnote{It should not be too surprising to see that
the complexity for the stochastic case is much worse than that for the deterministic case.
For example, in the convex case, it is known~\cite{Nest04,Lan10-3} that the complexity for finding an solution $\bar x$ satisfying
$f(\bar x) - f^* \le \epsilon$ will be substantially increased from ${\cal O}(1/\sqrt{\epsilon})$ to
${\cal O}(1/\epsilon^2)$ as one moves from the deterministic to stochastic setting.}.
Moreover, if $f(\cdot)$ is convex, we show that the relation $\bbe[f(\bar x) - f^*] \le \epsilon$ always holds.
We demonstrate that such a complexity result is nearly optimal for solving convex SP problems (see the discussions after
Corollary~\ref{noncvx_smth_cor}).

Secondly, in order to improve the large deviation properties and hence the
reliability of the RSG method, we present a two-phase randomized stochastic gradient ($2$-RSG)
method by introducing a post-optimization phase
to evaluate a short list of solutions
generated by several independent runs of the RSG method.
We show that the complexity of the $2$-RSG method for
computing an {\sl $(\epsilon, \Lambda)$-solution} of problem \eqnok{NLP},
i.e.,  a point $\bar x$ such that
$\Prob\{\|\nabla f(\bar x)\|^2 \le \epsilon\} \ge 1-\Lambda$
for some $\epsilon > 0$ and $\Lambda \in (0,1)$,
can be bounded by
\[
{\cal O} \left\{
 \frac{\log(1/\Lambda) \sigma^2}{\epsilon}\left[\frac{1}{\epsilon}
 +  \frac{\log(1/\Lambda)}{\Lambda}\right]
\right\}.
\]
We further show that, under certain light-tail assumption about the $\SFO$,
the above complexity bound can be reduced to
\[
{\cal O} \left\{
 \frac{\log(1/\Lambda) \sigma^2}{\epsilon}\left(\frac{1}{\epsilon}
 +  \log\frac{1}{\Lambda}\right)
\right\}.
\]

Thirdly, we specialize the RSG method for the case where only stochastic
zeroth-order information is available. There exists a somewhat long history
for the development of zeroth-order (or derivative-free) methods in nonlinear programming
(see the monograph by Conn et al.~\cite{ConSchVic09} and references therein).
However, only few complexity results are available
for these types of methods, mostly for convex programming (e.g.,~\cite{nemyud:83,Nest11-1}) and
deterministic nonconvex programming problems (e.g.,~\cite{CarGouToi12,GarVic12,Nest11-1,Vinc12}).
The stochastic zeroth-order methods studied in this paper are directly motivated by
a recent important work due to Nesterov~\cite{Nest11-1}. More specifically, Nesterov proved in
\cite{Nest11-1} some tight bounds for approximating first-order
information by zeroth-order information using the Gaussian smoothing
technique (see Theorem~\ref{smth_approx}). Based on this technique,
he presented a series of new complexity results for zeroth-order methods.
For example, he established the 
${\cal O} \left(n /\epsilon \right)$ complexity, in terms of
$\bbe[f(\bar x) - f^*] \le \epsilon$, for a zeroth-order method applied to smooth convex programming problems (see in p.19 of \cite{Nest11-1})
along with some possible acceleration schemes. Here the expectation is taken with respect to the Gaussian random variables used in
the algorithms.
He had also proved the ${\cal O} (n/\epsilon)$ complexity, in terms of $\bbe[\|\nabla f (\bar x)\|^2] \le \epsilon$, 
for solving smooth nonconvex problems (see p.24 of \cite{Nest11-1}).
While these bounds were obtained for solving deterministic optimization
problems, Nesterov established the ${\cal O} (n^2/\epsilon^2)$
complexity, in terms of $\bbe[f(\bar x) - f^*] \le \epsilon$, for solving general nonsmooth convex
SP problems (see p.17 of \cite{Nest11-1}).

By incorporating the Gaussian smoothing technique~\cite{Nest11-1} into the RSG method,
we present a randomized stochastic gradient free (RSGF) method
for solving a class of simulation-based optimization problems and
demonstrate that its iteration complexity for finding
the aforementioned $\epsilon$-solution (i.e., $\bbe[\|\nabla f(\bar x)\|^2] \le \epsilon$)
can be bounded by ${\cal O}(n/\epsilon^2)$. To the best of our knowledge,
this appears to be the first complexity result for nonconvex stochastic zeroth-order methods in the literature.
Moreover, the same RSGF algorithm possesses an ${\cal O}(n/\epsilon^2)$
complexity bound, in terms of $\bbe[f(\bar x) - f^*] \le \epsilon$,
for solving smooth convex SP problems. It is interesting to observe that
this bound has a much weaker dependence on $n$ than 
the one previously established by Nesterov for solving general nonsmooth convex SP problems (see p.17 of \cite{Nest11-1}). 
Such an improvement is obtained by explicitly making use of the smoothness properties of
the objective function and carefully choosing the stepsizes and smoothing
parameter used in the RSGF method.

This paper is organized as follows. We introduce two stochastic first-order methods,
i.e., the RSG and $2$-RSG methods,
for nonconvex SP, and establish their convergence properties in Section 2.
We then specialize these methods for solving a class of simulation-based optimization
problems in Section 3. Some brief concluding remarks are also presented in Section 4.

\subsection{Notation and terminology} \label{notation-sco}
As stated in \cite{Nest04}, we say that $f\in {\cal C}_L^{1,1}(\cE)$ if it is differentiable and
\[
\|\nabla f(y) - \nabla f(x)\| \le L\|y-x\|, \ \ \
\forall x, y \in \cE.
\]
Clearly, we have
\beq \label{smooth}
|f(y) - f(x) - \langle \nabla f(x), y - x \rangle | \le \frac{L}{2} \|y - x\|^2, \ \ \
\forall x, y  \in \cE.
\eeq
If, in addition, $f(\cdot)$ is convex, then
\beq \label{smooth_lb1}
f(y) - f(x) - \langle \nabla f(x), y - x \rangle \ge \frac {1}{2L} \|\nabla f(y) - \nabla f(x)\|^2,
\eeq
and
\beq \label{smooth_lb}
\langle \nabla f(y) - \nabla f(x), y - x \rangle \ge \frac {1}{L} \|\nabla f(y) - \nabla f(x)\|^2, \ \ \
\forall x, y \in \cE.
\eeq

\setcounter{equation}{0}
\section{Stochastic first-order methods} \label{sec_first}
Our goal in this section is to present and analyze a new class of SA algorithms
for solving general smooth nonlinear (possibly nonconvex) SP problems.
More specifically, we present the RSG method and
establish its convergence properties in Subsection~\ref{sec_RSG},
and then introduce the $2$-RSG method
which can significantly improve the large-deviation properties of the RSG method
in Subsection~\ref{sec_2RSG}.

We assume throughout this section that Assumption A1 holds.
In some cases, Assumption A1 is augmented by the following
``light-tail'' assumption.

\vgap

{\bf A2:} For any $x \in \bbr^n$ and $k \ge 1$, we have
\beq
\bbe\left[\exp \{\| G(x, \xi_k) - g(x) \|^2/\sigmasco^2 \}
\right] \leq \exp\{1\}. \label{ass2.a}
\eeq
It can be easily seen that Assumption A2 implies Assumption A1.b) by Jensen's inequality.

\subsection{The randomized stochastic gradient method} \label{sec_RSG}



\vgap

The convergence of existing SA methods requires
$f(\cdot)$ to be convex~\cite{NJLS09-1,lns11,Lan10-3,GhaLan12-2a,GhaLan10-1b}.
Moreover, in order to guarantee the convexity of $f(\cdot)$,
one often need to assume that the random variables $\xi_k$, $k\ge 1$,
to be independent of the search sequence $\{x_k\}$. Below we present a new SA-type algorithm that
can deal with both convex and nonconvex SP problems, and allow random noises
to be dependent on the search sequence. This algorithm is obtained by
incorporating a certain randomization scheme into the classical SA method.

\vgap

\noindent {\bf A randomized stochastic gradient (RSG) method}
\begin{itemize}
\item [] {\bf Input:} Initial point $x_1$,
iteration limit $N$, stepsizes $\{\gamma_k\}_{k \ge 1}$
and probability mass function $P_{R}(\cdot)$ supported on $\{1, \ldots, N\}$.
\item[] {\bf Step} $0$. Let $R$ be a random variable with probability mass function $P_{R}$.
\item [] {\bf Step} $k = 1, \ldots, R$.
Call the stochastic first-order oracle for computing $G(x_k, \xi_k)$ and
set
\beq \label{update_SA}
x_{k+1} = x_k - \gamma_k G(x_k, \xi_k).
\eeq
\item[]  {\bf Output} $x_R$.
\end{itemize}

A few remarks about the above RSG method are in order.
Firstly, in comparison with the classical SA, we have used
a random iteration count, $R$, to terminate the execution of the RSG algorithm.
Equivalently, one can view such a randomization scheme from a slightly
different perspective described as follows. Instead of terminating
the algorithm at the $R$-th step, one can also run the RSG algorithm for $N$
iterations but randomly choose a search point $x_R$ (according to $P_R$) from its trajectory
as the output of the algorithm. Clearly, using the latter scheme, we just
need to run the algorithm for the first $R$ iterations and the remaining
$N - R$ iterations are surpluses. Note however, that the primary goal to
introduce the random iteration count $R$ is to derive new complexity results for
nonconvex SP, rather than save the computational efforts in the last $N-R$ iterations
of the algorithm. Indeed, if $R$ is uniformly distributed, 
the computational gain from such a randomization scheme is simply a factor of
$2$. Secondly, the RSG algorithm described above
is conceptual only because we have not specified the selection of the
stepsizes $\{\gamma_k\}$ and the probability mass function $P_R$ yet.
We will address this issue after establishing some basic convergence
properties of the RSG method.

\vgap

The following result describes some convergence properties of the RSG method.

\begin{theorem} \label{noncvx_smth_theorem}
Suppose that the stepsizes $\{\gamma_k\}$ and
the probability mass function $P_R(\cdot)$
in the RSG method are chosen such that $\gamma_k < 2/ L$ and
\beq \label{prob_fun}
P_R(k) := \prob\{R=k\} = \frac{2 \gamma_k- L\gamma_k^2}
{{\sum_{k=1}^N (2 \gamma_k- L \gamma_k^2)}}, \ \ k= 1,...,N.
\eeq
Then, under Assumption A1,
\begin{itemize}
\item [a)]for any $N \ge 1$, we have
\beq \label{noncvx_smth_converg}
\frac{1}{L} \bbe[\|\nabla f(x_R)\|^2]
\le \frac{D_f^2 + \sigma^2 {\sum_{k=1}^N \gamma_k^2}}{{\sum_{k=1}^N (2\gamma_k-L\gamma_k^2)}},
\eeq
where the expectation is taken with respect to $R$ and $\xi_{[N]} := (\xi_1,...,\xi_N)$,
\beq \label{def_Df}
D_f := \left[\frac{2\left(f(x_1) - f^*\right)}{L}\right]^\frac{1}{2},
\eeq
and $f^*$ denotes the optimal value of problem \eqnok{NLP};
\item [b)] if, in addition, problem \eqnok{NLP} is convex with an optimal solution $x^*$,
then, for any $N \ge 1$,
\beq \label{cvx_smth_converg}
\bbe[f(x_R) - f^*]
\le \frac{D_X^2
+ \sigma^2 {\sum_{k=1}^N \gamma_k^2}}{{\sum_{k=1}^N (2\gamma_k-L\gamma_k^2)}},
\eeq
where the expectation is taken with respect to $R$ and $\xi_{[N]}$,
and
\beq \label{def_Dx}
D_X := \|x_1-x^*\|.
\eeq
\end{itemize}
\end{theorem}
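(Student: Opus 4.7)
My plan is to treat the two parts with closely related arguments, both of which exploit the fact that the randomization rule $P_R(k) \propto (2\gamma_k - L\gamma_k^2)$ converts a weighted telescoping sum into the expectation $\bbe[\cdot]$ over the law of $R$. For part (a), I start from the descent inequality \eqref{smooth} applied at $x_{k+1}$ and substitute $x_{k+1} - x_k = -\gamma_k G(x_k,\xi_k)$; writing $G(x_k,\xi_k) = \nabla f(x_k) + \delta_k$, the conditional expectation in $\xi_k$ annihilates the cross term $\langle \nabla f(x_k), \delta_k\rangle$ by A1(a) and bounds the stochastic quadratic term by $\sigma^2$ via A1(b), yielding
\[
\bbe[f(x_{k+1}) \mid x_k] \le f(x_k) - \bigl(\gamma_k - \tfrac{L\gamma_k^2}{2}\bigr)\|\nabla f(x_k)\|^2 + \tfrac{L\gamma_k^2}{2}\sigma^2.
\]
Taking unconditional expectations, telescoping from $k=1$ to $N$, using $\bbe[f(x_{N+1})] \ge f^*$, and recalling the definition of $D_f^2$ in \eqref{def_Df} gives $\sum_{k=1}^N (2\gamma_k - L\gamma_k^2)\,\bbe[\|\nabla f(x_k)\|^2] \le L D_f^2 + L\sigma^2 \sum_{k=1}^N \gamma_k^2$; dividing by $L \sum_{k=1}^N (2\gamma_k - L\gamma_k^2)$ and identifying the left side with $L^{-1}\bbe[\|\nabla f(x_R)\|^2]$ through $P_R$ produces \eqref{noncvx_smth_converg}.

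For part (b), I take the squared-distance Lyapunov function $V_k := \|x_k - x^*\|^2$ and expand $V_{k+1}$ using the update rule, which after conditional expectation in $\xi_k$ yields $\bbe[V_{k+1} \mid x_k] \le V_k - 2\gamma_k\langle \nabla f(x_k), x_k - x^*\rangle + \gamma_k^2 \|\nabla f(x_k)\|^2 + \gamma_k^2 \sigma^2$. The main obstacle is to control the inner product together with the stray $\gamma_k^2 \|\nabla f(x_k)\|^2$ term so that the final coefficient on $(f(x_k) - f^*)$ comes out to exactly $2\gamma_k - L\gamma_k^2$ over the full range $\gamma_k \in (0,2/L)$; the naive route of using convexity $\langle \nabla f(x_k), x_k - x^*\rangle \ge f(x_k) - f^*$ and then dominating $\|\nabla f(x_k)\|^2 \le 2L(f(x_k)-f^*)$ via \eqref{smooth_lb1} gives only the coefficient $2\gamma_k - 2L\gamma_k^2$ and forces the stricter condition $\gamma_k < 1/L$. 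My fix is the split
\[
-2\gamma_k\langle \nabla f(x_k), x_k - x^*\rangle = -(2\gamma_k - L\gamma_k^2)\langle \nabla f(x_k), x_k - x^*\rangle - L\gamma_k^2\langle \nabla f(x_k), x_k - x^*\rangle,
\]
valid with both weights nonnegative precisely because $\gamma_k < 2/L$: apply convexity to the first piece to produce $-(2\gamma_k - L\gamma_k^2)(f(x_k) - f^*)$, and apply \eqref{smooth_lb} (which with $\nabla f(x^*)=0$ gives $\langle \nabla f(x_k), x_k - x^*\rangle \ge \tfrac{1}{L}\|\nabla f(x_k)\|^2$) to the second to produce $-\gamma_k^2 \|\nabla f(x_k)\|^2$, cancelling the stray quadratic term exactly. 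This leaves the clean recursion $\bbe[V_{k+1} \mid x_k] \le V_k - (2\gamma_k - L\gamma_k^2)(f(x_k) - f^*) + \gamma_k^2 \sigma^2$, after which telescoping, dropping the nonnegative $\bbe[V_{N+1}]$, using $V_1 = D_X^2$, and normalizing by $\sum_{k=1}^N (2\gamma_k - L\gamma_k^2)$ yields \eqref{cvx_smth_converg}.
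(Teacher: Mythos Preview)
Your proof is correct and follows essentially the same route as the paper. Part (a) is identical in substance: the paper also derives the per-step descent inequality from \eqref{smooth}, sums and takes expectations to kill the cross term via \eqnok{ass1.a}, and then identifies the normalized weighted average with $\bbe[\|\nabla f(x_R)\|^2]$ through $P_R$. For part (b), your split $-2\gamma_k = -(2\gamma_k - L\gamma_k^2) - L\gamma_k^2$ followed by convexity on the first piece and \eqref{smooth_lb} on the second is algebraically the same manipulation the paper performs, just presented in the reverse order: the paper first uses \eqref{smooth_lb} to absorb $\gamma_k^2\|\nabla f(x_k)\|^2$ into $L\gamma_k^2\langle\nabla f(x_k),x_k-x^*\rangle$, combines it with the $-2\gamma_k$ term, and then applies convexity. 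The only cosmetic differences are that the paper keeps the noise terms explicit (grouping them as $-2\gamma_k\langle x_k - \gamma_k\nabla f(x_k) - x^*,\delta_k\rangle$) and takes expectation after summing, whereas you take conditional expectations step by step; and that conditioning should strictly be on the history $\xi_{[k-1]}$ rather than on $x_k$, but this is a notational point and does not affect the argument.
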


\vgap
\begin{proof}
Display $\delta_k \equiv G(x_k, \xi_k) - \nabla f(x_k)$, $k \ge 1$.
We first show part a). Using the assumption that $f \in {\cal C}_L^{1,1}(\bbr^n)$, \eqnok{smooth} and
\eqnok{update_SA}, we have, for any $k = 1, \ldots, N$,
\beqa
f(x_{k+1}) &\le& f(x_k) + \langle \nabla f(x_k), x_{k+1}-x_k \rangle + \frac{L}{2} \gamma_k^2 \|G(x_k, \xi_k)\|^2 \nn \\
&=& f(x_k) -\gamma_k \langle \nabla f(x_k), G(x_k, \xi_k) \rangle + \frac{L}{2} \gamma_k^2 \|G(x_k, \xi_k)\|^2 \nn \\
&=& f(x_k) -\gamma_k \|\nabla f(x_k)\|^2 - \gamma_k \langle \nabla f(x_k), \delta_k \rangle + \frac{L}{2} \gamma_k^2
\left[\|\nabla f(x_k)\|^2 + 2 \langle \nabla f(x_k), \delta_k \rangle + \|\delta_k\|^2  \right] \nn \\
&=& f(x_k) - \left(\gamma_k - \frac{L}{2} \gamma_k^2\right) \|\nabla f(x_k)\|^2
- \left(\gamma_k - L \gamma_k^2 \right) \langle \nabla f(x_k), \delta_k \rangle + \frac{L}{2} \gamma_k^2 \|\delta_k\|^2. \label{noncvx_smth_cnvg1}
\eeqa
Summing up the above inequalities and re-arranging the terms, we obtain
\beqa
\sum_{k=1}^N \left(\gamma_k - \frac{L}{2} \gamma_k^2\right) \|\nabla f(x_k)\|^2
&\le& f(x_1) - f(x_{N+1}) - \sum_{k=1}^N \left(\gamma_k - L \gamma_k^2 \right) \langle \nabla f(x_k), \delta_k \rangle +
\frac{L}{2} \sum_{k=1}^N  \gamma_k^2 \|\delta_k\|^2 \nn \\
&\le& f(x_1) - f^* - \sum_{k=1}^N \left(\gamma_k - L \gamma_k^2 \right) \langle \nabla f(x_k), \delta_k \rangle +
\frac{L}{2} \sum_{k=1}^N  \gamma_k^2 \|\delta_k\|^2,\label{main_recursion}
\eeqa
where the last inequality follows from the fact that $f(x_{N+1}) \ge f^*$.
Note that the search point $x_k$ is a function of
the history $\xi_{[k-1]}$ of the
generated random process and hence is random.
Taking expectations (with respect to $\xi_{[N]}$)
on both sides of \eqnok{main_recursion}
and noting that under Assumption A1, $\bbe[\|\delta_k\|^2]
\le \sigma^2$,
 and
\beq \label{fh_seq0}
\bbe[\langle \nabla f(x_k), \delta_k \rangle |\xi_{[k-1]}] = 0,
\eeq
we obtain
\beq \label{main_recursion2}
\sum_{k=1}^N \left(\gamma_k - \frac{L}{2} \gamma_k^2\right) \bbe_{\xi_{[N]}}\|\nabla f(x_k)\|^2
\le f(x_1) - f^* +
\frac{L\sigma^2}{2} \sum_{k=1}^N  \gamma_k^2
\eeq
Dividing both sides of the above inequality by
$L \sum_{k=1}^N \left(\gamma_k - L \gamma_k^2 / 2\right)$ and
noting that
\[
\bbe[\|\nabla f(x_R)\|^2] = \bbe_{R, \xi_{[N]}}[\|\nabla f(x_R)\|^2] =
\frac{\sum_{k=1}^N \left(2\gamma_k - L \gamma_k^2\right) \bbe_{\xi_{[N]}}\|\nabla f(x_k)\|^2}
{\sum_{k=1}^N \left(2\gamma_k - L \gamma_k^2\right)},
\]
we conclude
\[
\frac{1}{L} \bbe[\|\nabla f(x_R)\|^2] \le \frac{1}{{\sum_{k=1}^N (2\gamma_k-L\gamma_k^2)}} \left[
\frac{2\left(f(x_1) - f^*\right)}{L} + \sigma^2 {\sum_{k=1}^N \gamma_k^2}\right],
\]
which, in view of \eqnok{def_Df}, clearly implies \eqnok{noncvx_smth_converg}.

We now show that part b) holds. Display $\w_k \equiv \|x_k - x^*\|$.
First observe that, for any $k = 1, \ldots, N$,
\beqas
\lefteqn{\w_{k+1}^2 = \|x_k - \gamma_k G(x_k, \xi_k) - x^*\|^2 }\\
&=& \w_k^2 - 2 \gamma_k \langle G(x_k, \xi_k), x_k - x^*\rangle
+ \gamma_k^2 \|G(x_k, \xi_k)\|^2 \\
&=& \w_k^2 - 2 \gamma_k \langle \nabla f(x_k) + \delta_k, x_k - x^*\rangle
+ \gamma_k^2 \left(\|\nabla f(x_k)\|^2 + 2\langle \nabla f(x_k), \delta_k \rangle
+ \|\delta_k\|^2 \right).
\eeqas
Moreover, in view of \eqnok{smooth_lb} and the fact that $\nabla f(x^*) = 0$, we have
\beq \label{basic_convex_rel}
\frac{1}{L} \|\nabla f(x_k)\|^2 \le  \langle \nabla f(x_k), x_k - x^*\rangle.
\eeq
Combining the above two relations, we obtain, for any $k = 1, \ldots, N$,
\beqas
\w_{k+1}^2 &\le& \w_k^2
- (2\gamma_k - L \gamma_k^2) \langle \nabla f(x_k), x_k - x^* \rangle
- 2\gamma_k \langle x_k - \gamma_k \nabla f(x_k) - x^*, \delta_k \rangle
+ \gamma_k^2 \|\delta_k\|^2\\
&\le& \w_k^2
- (2\gamma_k - L \gamma_k^2) [ f(x_k) - f^*]
- 2\gamma_k \langle x_k - \gamma_k \nabla f(x_k) - x^*, \delta_k \rangle
+ \gamma_k^2 \|\delta_k\|^2,
\eeqas
where the last inequality follows from the convexity of $f(\cdot)$
and the fact that $\gamma_k \le 2/L$.
Summing up the above inequalities and re-arranging the terms, we
have
\beqas
\sum_{k=1}^N(2\gamma_k - L \gamma_k^2) [ f(x_k) - f^*]
&\le& \w_1^2 - \w_{N+1}^2 -2 \sum_{k=1}^N \gamma_k \langle x_k -
\gamma_k \nabla f(x_k) - x^*, \delta_k \rangle + \sum_{k=1}^N \gamma_k^2 \|\delta_k\|^2\\
&\le& D_X^2 - 2 \sum_{k=1}^N \gamma_k \langle x_k -
\gamma_k \nabla f(x_k) - x^*, \delta_k \rangle + \sum_{k=1}^N \gamma_k^2 \|\delta_k\|^2,
\eeqas
where the last inequality follows from \eqnok{def_Dx} and the fact that
$\w_{N+1} \ge 0$. The rest of the proof is similar to that of part a) and hence the details
are skipped.


\end{proof}

\vgap

We now describe a possible strategy for the selection of the stepsizes $\{\gamma_k\}$
in the RSG method. For the sake of simplicity, let us assume that a constant
stepsize policy is used, i.e., $\gamma_k = \gamma$, $k = 1, \ldots, N$, for some $\gamma \in (0, 2/L)$.
Note that the assumption of constant stepsizes does not hurt the efficiency estimate of the
RSG method. The following corollary of Theorem~\ref{noncvx_smth_theorem} is obtained
by appropriately choosing the parameter $\gamma$.

\begin{corollary} \label{noncvx_smth_cor}
Suppose that the stepsizes $\{\gamma_k\}$ are set to
\beq \label{constant_step}
\gamma_k = \min \left\{ \frac{1}{L}, \frac{\tilde D}{\sigma \sqrt{N}}\right\}, k = 1, \ldots, N,
\eeq
for some $\tilde D > 0$. Also assume that the probability mass function $P_R(\cdot)$
is set to \eqnok{prob_fun}. Then, under Assumption A1, we have
\beq \label{nocvx_smooth}
\frac{1}{L} \bbe[\|\nabla f(x_R)\|^2] \le \cB_N:=
\frac{L D_f^2}{N} + \left(\tilde D + \frac{D_f^2}{\tilde D} \right) \frac{\sigma}{\sqrt{N}},
\eeq
where $D_f$ is defined in \eqnok{def_Df}.
If, in addition, problem \eqnok{NLP} is convex with an optimal solution $x^*$,
then
\beq \label{cvx_smooth}
\bbe[f(x_R) - f^*] \le \frac{L D_X^2}{N} + \left(\tilde D + \frac{D_X^2}{\tilde D} \right)
\frac{\sigma}{\sqrt{N}},
\eeq
where $D_X$ is defined in \eqnok{def_Dx}.
\end{corollary}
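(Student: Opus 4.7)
The plan is simply to substitute the constant stepsize policy $\gamma_k \equiv \gamma = \min\{1/L,\tilde D/(\sigma\sqrt{N})\}$ into the two generic bounds \eqnok{noncvx_smth_converg} and \eqnok{cvx_smth_converg} supplied by Theorem~\ref{noncvx_smth_theorem}, and then to simplify the resulting expressions. There is no new dynamic analysis to carry out — the only ``work'' is algebraic, so I do not expect a real obstacle, only a careful handling of the minimum.

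First I would evaluate the sums appearing in \eqnok{noncvx_smth_converg}: with $\gamma_k \equiv \gamma$, $\sum_{k=1}^N \gamma_k^2 = N\gamma^2$ and $\sum_{k=1}^N(2\gamma_k - L\gamma_k^2) = N\gamma(2-L\gamma)$. Since the definition of $\gamma$ forces $\gamma \le 1/L$, we get $2 - L\gamma \ge 1$, hence $\sum_{k=1}^N(2\gamma_k - L\gamma_k^2) \ge N\gamma > 0$. Plugging this into \eqnok{noncvx_smth_converg} yields
\[
\frac{1}{L}\bbe[\|\nabla f(x_R)\|^2] \;\le\; \frac{D_f^2}{N\gamma} + \sigma^2 \gamma,
\]
and the entirely analogous manipulation on \eqnok{cvx_smth_converg} gives the same inequality with $D_X$ replacing $D_f$ (for the convex part~b)).

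The remaining step is to control $\gamma$ and $1/\gamma$ by their two candidate values. From $\gamma = \min\{1/L,\tilde D/(\sigma\sqrt{N})\}$ we have $\gamma \le \tilde D/(\sigma\sqrt{N})$ directly, and for the reciprocal we use the elementary bound $1/\min\{a,b\} = \max\{1/a,1/b\} \le 1/a + 1/b$ to conclude $1/\gamma \le L + \sigma\sqrt{N}/\tilde D$. Substituting these two estimates into $D_f^2/(N\gamma) + \sigma^2 \gamma$ produces
\[
\frac{D_f^2}{N}\Bigl(L + \frac{\sigma\sqrt{N}}{\tilde D}\Bigr) + \sigma^2 \cdot \frac{\tilde D}{\sigma\sqrt{N}} \;=\; \frac{LD_f^2}{N} + \Bigl(\tilde D + \frac{D_f^2}{\tilde D}\Bigr)\frac{\sigma}{\sqrt{N}},
\]
which is exactly the quantity $\cB_N$ in \eqnok{nocvx_smooth}. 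Repeating the identical chain of inequalities with $D_f$ replaced by $D_X$ delivers \eqnok{cvx_smooth}, and the corollary is proved. The only point that requires a moment's care is verifying that $\gamma \le 1/L$ really does give $2 - L\gamma \ge 1$ so that one can discard the factor $(2-L\gamma)$ harmlessly; everything else is pure arithmetic.
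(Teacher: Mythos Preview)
Your proposal is correct and essentially identical to the paper's own proof: both substitute the constant step into \eqnok{noncvx_smth_converg}, use $\gamma\le 1/L$ to drop the factor $2-L\gamma\ge 1$, and then bound $1/\gamma=\max\{L,\sigma\sqrt{N}/\tilde D\}\le L+\sigma\sqrt{N}/\tilde D$ and $\gamma\le \tilde D/(\sigma\sqrt{N})$ to arrive at $\cB_N$. The convex part is handled the same way in both.
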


\begin{proof}
Noting that by \eqnok{constant_step}, we have
\beqas
\frac{D_f^2 + \sigma^2 {\sum_{k=1}^N \gamma_k^2}}{{\sum_{k=1}^N (2\gamma_k-L\gamma_k^2)}}
&=& \frac{D_f^2 + N \sigma^2 \gamma_1^2}{N \gamma_1 (2-L\gamma_1)}
\le \frac{D_f^2 + N \sigma^2 \gamma_1^2}{N \gamma_1} = \frac{D_f^2}{N \gamma_1} + \sigma^2 \gamma_1\\
&\le& \frac{D_f^2}{N} \max\left\{L, \frac{\sigma \sqrt{N}}{\tilde D} \right\}
+ \sigma^2 \frac{\tilde D}{\sigma \sqrt{N}} \\
&\le& \frac{L D_f^2}{N} + \left(\tilde D + \frac{D_f^2}{\tilde D} \right) \frac{\sigma}{\sqrt{N}},
\eeqas
which together with \eqnok{noncvx_smth_converg} then imply \eqnok{nocvx_smooth}.
Relation \eqnok{cvx_smooth} follows similarly from the above inequality (with $D_f$ replaced by $D_X$)
and \eqnok{cvx_smth_converg}.
\end{proof}

\vgap



We now add a few remarks about the results obtained in Theorem \ref{noncvx_smth_theorem}
and Corollary~\ref{noncvx_smth_cor}.
Firstly, as can be seen from \eqnok{main_recursion2}, instead of randomly selecting a solution
$x_R$ from $\{x_1, \ldots, x_N\}$, another possibility would be to output
the solution $\hat x_N$ such that
\beq \label{DSG}
\|\nabla f(\hat x_N)\| = \min_{k=1,\ldots, N} \|\nabla f(x_k)\|.
\eeq
We can show that $\bbe\|\nabla f(\hat x_N)\|$ goes to zero with
similar rates of convergence as in \eqnok{noncvx_smth_converg} and \eqnok{nocvx_smooth}.
However, to use this strategy would require some extra computational effort
to compute $\|\nabla f(x_k)\|$ for all $k = 1, \ldots, N$.
Since $\|\nabla f(x_k)\|$ cannot be computed exactly, to estimate them
by using Monte-carlo simulation would incur additional
approximation errors and raise some reliability issues.
On the other hand, the above RSG method does not require any extra
computational effort for estimating the gradients $\|\nabla f(x_k)\|$, $k = 1, \ldots, N$.

Secondly, observe that in the stepsize policy \eqnok{constant_step},
we need to specify a parameter $\tilde D$. While the RSG method converges
for any arbitrary $\tilde D>0$,
it can be easily seen from
\eqnok{nocvx_smooth} and \eqnok{cvx_smooth} that an optimal selection
of $\tilde D$ would be $D_f$ and $D_X$, respectively, for solving
nonconvex and convex SP problems. With such selections, the bounds in \eqnok{nocvx_smooth}
and \eqnok{cvx_smooth}, respectively, reduce to
\beq \label{nocvx_smooth1}
\frac{1}{L} \bbe[\|\nabla f(x_R)\|^2]
\le \frac{L D_f^2}{N} + \frac{2 D_f\sigma}{\sqrt{N}}.
\eeq
and
\beq \label{cvx_smooth1}
\bbe[f(x_R) - f^*] \le \frac{L D_X^2}{N} + \frac{2 D_X \sigma}{\sqrt{N}}.
\eeq
Note however, that the exact values of $D_f$ or $D_X$ are rarely known and
one often need to set $\tilde D$ to a suboptimal value, e.g., certain
upper bounds on $D_f$ or $D_X$.

Thirdly, one possible drawback for the above RSG method is that
one need to estimate $L$ to obtain an upper bound on $\gamma_k$ (see, e.g., \eqnok{constant_step}),
which will also possibly affect the selection of $P_R$ (see \eqnok{prob_fun}). 
Note that similar requirements also exist for some deterministic first-order methods (e.g., gradient descent and
Nesterov's accelerated gradient methods). While under the
deterministic setting, one can somehow relax such requirements
by using certain line-search procedures to enhance the practical performance of these methods,
it is more difficult to devise similar line-search procedures for the stochastic setting, since
the exact values of $f(x_k)$ and $\nabla f(x_k)$ are not available.
It should be noted, however, that we do not need very accurate
estimate for $L$ in the RSG method. Indeed, it can be easily checked that
the RSG method exhibits an ${\cal O}(1/\sqrt{N})$ rate of convergence
if the stepsizes $\{\gamma_k\}$ are set to
\[
\min\left\{\frac{1}{q L}, \frac{\tilde D}{\sigma\sqrt{N}}\right\}, \ \ \ k = 1, \ldots, N
\]
for any $q \in [1, \sqrt{N}]$. In other words, we can overestimate the value of $L$ by
a factor up to $\sqrt{N}$
and the resulting RSG method still exhibits similar rate of convergence.
A common practice in stochastic optimization is to estimate
$L$ by using the stochastic gradients computed at a small number of trial 
points (see, e.g., \cite{NJLS09-1,lns11,GhaLan12-2a,GhaLan10-1b}). 
We have adopted such a strategy in our implementation of the RSG method as described in more details
in the technical report associated with this paper~\cite{GhaLan12}.
It is also worth noting that,
although in general the selection of $P_R$ will depend on $\gamma_k$ and hence on $L$,
such a dependence is not necessary in some special cases.
In particular, if the stepsizes $\{\gamma_k\}$ are chosen according to
a constant stepsize policy (e.g., \eqnok{constant_step}), then $R$ is uniformly distributed on $\{1, \ldots, N\}$.

Fourthly, it is interesting to note that the RSG method allows us to have
a unified treatment for both nonconvex and convex SP problems in view of the
specification of $\{\gamma_k\}$ and $P_R(\cdot)$ (c.f., \eqnok{prob_fun}
and \eqnok{constant_step}). Recall that
the optimal rate of convergence for solving smooth convex SP problems is given by
\[
{\cal O} \left(\frac{L D_X^2}{N^2}+\frac{D_X \sigma}{\sqrt N} \right).
\]
This bound has been obtained by Lan~\cite{Lan10-3} based on a stochastic
counterpart of Nesterov's method \cite{Nest83-1,Nest04}.
Comparing \eqnok{cvx_smooth1} with the above bound,
the RSG method possesses a nearly optimal rate of convergence, since the second term in \eqnok{cvx_smooth1}
is unimprovable while the first term in \eqnok{cvx_smooth1} can be much improved.
Moreover, as shown by Cartis et al.~\cite{CarGouToi12},
the first term in \eqnok{nocvx_smooth1} for nonconvex problems is also
unimprovable for gradient descent methods.
It should be noted, however that the analysis in \cite{CarGouToi12}
applies only for gradient descent methods and  
does not show that the ${\cal O} (1/N)$ term is tight for all first-order
methods.

Finally, observe that we can use different stepsize policy other than the constant one in \eqnok{constant_step}.
In particular, it can be shown that the RSG method with the following two stepsize policies will
exhibit similar rates of convergence as those in Corollary~\ref{noncvx_smth_cor}.
\begin{itemize}
\item {\sl Increasing stepsize policy}: 
\[
\gamma_k = \min \left\{ \frac{1}{L}, \frac{\tilde D \sqrt{k}}{\sigma N}\right\}, k = 1, \ldots, N.
\]

\item {\sl Decreasing stepsize policy}: 
\[
\gamma_k = \min \left\{ \frac{1}{L}, \frac{\tilde D }{\sigma (kN)^{\frac{1}{4}}}\right\}, k = 1, \ldots, N.
\]
\end{itemize}
Intuitively speaking, one may want to choose decreasing stepsizes which, according to the definition of $P_R(\cdot)$ in \eqnok{prob_fun},
can stop the algorithm earlier. On the other hand, as the algorithm moves forward and local information about the gradient gets better,
choosing increasing stepsizes might be a better option. We expect that the practical performance of these stepsize policies will
depend on each problem instance to be solved.


\vgap

While Theorem~\ref{noncvx_smth_theorem}
and Corollary~\ref{noncvx_smth_cor} establish the expected convergence
performance over many runs of the RSG method, we are also interested
in the large-deviation properties for a single run of this method. In particular,
we are interested in establishing its complexity for computing
an {\sl $(\epsilon, \Lambda)$-solution}
of problem \eqnok{NLP}, i.e., a point $\bar x$ satisfying
$\prob\{\|\nabla f(\bar x)\|^2 \le \epsilon\} \ge 1 - \Lambda$
for some $\epsilon >0$ and $\Lambda \in (0,1)$.
%
By using \eqnok{nocvx_smooth} and Markov's inequality, we have
\beq \label{nocvx_smooth_prob}
\prob\left\{
\|\nabla f(x_R)\|^2 \ge  \lambda L \cB_N \right\}
\le \frac{1}{\lambda}, \ \ \forall \lambda > 0.
\eeq
It then follows that the number of
calls to $\SFO$ performed by the RSG method for finding
an $(\epsilon, \Lambda)$-solution, after disregarding a few constant factors,
can be bounded by
\beq \label{nocvx_smooth_prob1}
{\cal O} \left\{
\frac{1}{\Lambda \epsilon} +  \frac{\sigma^2}{\Lambda^2 \epsilon^2} \right\}.
\eeq
The above complexity bound is rather pessimistic in terms of its dependence on $\Lambda$.
We will investigate one possible way to significantly improve it in next subsection.

\subsection{A two-phase randomized stochastic gradient method} \label{sec_2RSG}
In this section, we describe a variant of the RSG method which can considerably improve the complexity
bound in \eqnok{nocvx_smooth_prob1}. This procedure consists of two phases: an optimization phase used to
generate a list of candidate solutions via a few independent runs of the RSG method
and a post-optimization phase in which a solution is selected
from this candidate list.

\vgap

\noindent {\bf A two-phase RSG ($2$-RSG) method}
\begin{itemize}
\item [] {\bf Input:}
Initial point $x_1$, number of runs $S$, iteration limit $N$, and sample size $T$.
\item [] {\bf Optimization phase:}
\item [] \hspace{0.1in} For $s = 1, \ldots, S$
\begin{itemize}
	\item [] Call the RSG method with input $x_1$,
iteration limit $N$, stepsizes $\{\gamma_k\}$ in \eqnok{constant_step} and
probability mass function $P_{R}$ in \eqnok{prob_fun}. Let $\bar x_s$ be
the output of this procedure.
\end{itemize}
\item [] {\bf Post-optimization phase:}
\item[] \hspace{0.1in} Choose a solution $\bar x^*$ from the candidate
list $\{\bar x_1, \ldots, \bar x_S\}$ such that
\beq \label{post_opt}
\|g(\bar x^*)\| = \min_{s = 1, \ldots, S} \|g(\bar x_s)\|, \ \
g(\bar x_s) := \frac{1}{T} \sum_{k=1}^T G(\bar x_s, \xi_k),
\eeq
where $G(x, \xi_k)$, $k = 1, \ldots, T$, are the stochastic gradients
returned by the $\SFO$.
\end{itemize}

Observe that in \eqnok{post_opt}, we define the best solution $\bar x^*$ as the one with the smallest
value of $\|g(\bar x_s)\|$, $s = 1, \ldots, S$. Alternatively, one can choose
$\bar x^*$ from $\{\bar x_1, \ldots, \bar x_S\}$ such that
\beq \label{post_opt2}
\tilde f(\bar x^*) = \min_{1, \ldots, S} \tilde f(\bar x_s), \ \
\tilde f(\bar x_s) = \frac{1}{T} \sum_{k=1}^T F(\bar x_s, \xi_k).
\eeq
It should be noted that the $2$-RSG method is different from
a two-phase procedure for convex stochastic programming by Nesterov and Vial~\cite{NesVia00},
where the average of $\bar x_1, \ldots, \bar x_S$ is chosen as the output solution.

In the $2$-RSG method described above, the number of calls
to the $\SFO$ are given by $S \times N$ and $S \times T$,
respectively, for the optimization phase and post-optimization phase.
Also note that we can possibly recycle the same sequence $\{\xi_k\}$ across
all gradient estimations in the post-optimization phase of $2$-RSG method.
We will provide in Theorem~\ref{2RSG_theorem} below certain bounds on $S$, $N$ and $T$,
to compute an $(\epsilon, \Lambda)$-solution of problem \eqnok{NLP}.

We need the following results regarding the large deviations of
vector valued martingales (see, e.g., Theorem 2.1 of \cite{jn08-1}).

\begin{lemma} \label{marting}
Assume that we are given a polish space with Borel probability measure $\mu$ and
a sequence of ${\cal F}_0 = \{\emptyset,\Omega\} \subseteq {\cal F}_1
\subseteq {\cal F}_2 \subseteq \ldots$ of $\sigma$-sub-algebras of Borel
$\sigma$-algebra of $\Omega$.
Let $\zeta_i \in \bbr^n$, $i = 1, \ldots, \infty$, be a martingale-difference sequence
of Borel functions on $\Omega$ such that $\zeta_i$ is ${\cal F}_i$  measurable and
$\bbe[\zeta_i| i-1] =0$, where $\bbe[\cdot|i]$, $i = 1, 2, \ldots$, denotes the
conditional expectation w.r.t. ${\cal F}_i$ and $\bbe \equiv \bbe[\cdot|0]$ is the expectation w.r.t.
$\mu$.
\begin{itemize}
\item [a)] If $\bbe[\|\zeta_i\|^2] \le \sigma_i^2$ for any $i \ge 1$, then
$
\bbe[\|\sum_{i=1}^N \zeta_i\|^2] \le \sum_{i=1}^N \sigma_i^2$. As a consequence, we have
\[
\forall N \ge 1, \lambda \ge 0: \prob\left\{
\|\sum_{i=1}^N \zeta_i \|^2 \ge \lambda \sum_{i=1}^N \sigma_i^2
\right\} \le \frac{1}{\lambda};
\]
\item [b)] If $\bbe\left\{\exp\left(
\|\zeta_i\|^2 /\sigma_i^{2}  \right)|i-1 \right\} \le \exp(1)$ almost surely for any $i \ge 1$, then
\[
\forall N \ge 1, \lambda \ge 0: \prob\left\{
\|\sum_{i=1}^N \zeta_i \| \ge \sqrt{2} (1+ \lambda) \sqrt{\sum_{i=1}^N \sigma_i^2}
\right\} \le \exp(-\lambda^2 /3).
\]
\end{itemize}
\end{lemma}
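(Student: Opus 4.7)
The plan is to prove the two parts separately: part (a) by a short orthogonality-plus-Markov argument, and part (b) by a Chernoff-Markov bound supported by an induction on the vector-valued exponential moment of $S_N := \sum_{i=1}^N \zeta_i$.

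For part (a), I would expand
\[
\Bigl\|\sum_{i=1}^N \zeta_i\Bigr\|^2 = \sum_{i=1}^N \|\zeta_i\|^2 + 2\sum_{1 \le i<j \le N}\langle \zeta_i, \zeta_j\rangle,
\]
and kill the cross terms via the martingale-difference property: for $i<j$ the vector $\zeta_i$ is ${\cal F}_{j-1}$-measurable, so $\bbe[\langle \zeta_i, \zeta_j\rangle|j-1] = \langle \zeta_i, \bbe[\zeta_j|j-1]\rangle = 0$, and the tower property gives $\bbe\langle \zeta_i,\zeta_j\rangle = 0$. Taking unconditional expectations in the identity above yields $\bbe\|S_N\|^2 \le \sum_i \sigma_i^2$, and Markov's inequality applied to the non-negative variable $\|S_N\|^2$ immediately gives the advertised $1/\lambda$ tail.

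For part (b), writing $D_N := \sum_{i=1}^N \sigma_i^2$, the goal is to establish
\[
\bbe\bigl[\exp\bigl(\|S_N\|^2/(2 D_N)\bigr)\bigr] \le \exp(1)
\]
by induction on $N$. Once this is shown, Markov's inequality applied to $\exp(\|S_N\|^2/(2D_N))$ at the threshold $r = \sqrt{2}(1+\lambda)\sqrt{D_N}$ produces a tail of $\exp(1-(1+\lambda)^2)$, and the elementary inequality $1-(1+\lambda)^2 = -2\lambda - \lambda^2 \le -\lambda^2/3$, valid for all $\lambda \ge 0$, delivers the stated $\exp(-\lambda^2/3)$ bound.

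The inductive step rests on the recursion $\|S_N\|^2 = \|S_{N-1}\|^2 + 2\langle S_{N-1},\zeta_N\rangle + \|\zeta_N\|^2$ together with a conditional analysis given ${\cal F}_{N-1}$. The $\|S_{N-1}\|^2$ term becomes deterministic under this conditioning; the exponential moment of $\|\zeta_N\|^2$ is absorbed via Jensen's inequality applied to the hypothesis $\bbe[\exp(\|\zeta_N\|^2/\sigma_N^2)|N-1]\le e$; and the cross term is treated by writing $\langle S_{N-1},\zeta_N\rangle = \|S_{N-1}\|\,\langle u,\zeta_N\rangle$ for the ${\cal F}_{N-1}$-measurable unit vector $u := S_{N-1}/\|S_{N-1}\|$, and then using a one-dimensional Gaussian-type MGF bound of the form $\bbe[\exp(t\langle u,\zeta_N\rangle)|N-1]\le \exp(c\, t^2\sigma_N^2)$. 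I expect this last one-dimensional MGF bound to be the main obstacle: it has to be derived from the squared-norm sub-Gaussian assumption together with the zero-mean property of $\zeta_N$ via a Taylor-Chernoff argument, with constants sharp enough that a suitable choice of the free parameter in the ensuing Young-type splitting closes the induction with exactly the factor $2$ in the denominator.
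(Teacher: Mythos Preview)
The paper does not prove this lemma at all: it is quoted verbatim as Theorem~2.1 of the cited reference \cite{jn08-1} (Juditsky--Nemirovski, ``Large deviations of vector-valued martingales in 2-smooth normed spaces''), so there is no in-paper argument to compare against. What you have written is, in outline, the argument of that reference specialized to the Euclidean case.

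Your part (a) is correct and complete: orthogonality of martingale increments kills the cross terms, and Markov's inequality on $\|S_N\|^2$ gives the $1/\lambda$ tail.

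For part (b), your plan is the standard one and matches the cited source: establish a uniform bound on the exponential moment $\bbe[\exp(\|S_N\|^2/(\alpha D_N))]$ and apply Chernoff--Markov. Your reduction at the end is fine; once such a bound with $\alpha=2$ holds, the inequality $1-(1+\lambda)^2\le -\lambda^2/3$ indeed yields the stated tail. The point you flag as the ``main obstacle'' is exactly the crux in \cite{jn08-1}: from the zero-mean hypothesis together with $\bbe[\exp(\|\zeta_N\|^2/\sigma_N^2)\mid N-1]\le e$ one derives a conditional scalar MGF bound of the form $\bbe[\exp(t\langle u,\zeta_N\rangle)\mid N-1]\le\exp(c\,t^2\sigma_N^2)$ for every $\mathcal F_{N-1}$-measurable unit vector $u$, and then closes the recursion. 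Be aware that the naive splitting via Cauchy--Schwarz on the three factors $\exp(\|S_{N-1}\|^2/(2D_N))$, $\exp(\langle S_{N-1},\zeta_N\rangle/D_N)$, $\exp(\|\zeta_N\|^2/(2D_N))$ does not quite close with the constant $\alpha=2$; one has to organize the induction a bit more carefully (balancing the denominator shift $D_{N-1}\to D_N$ against the constant $c$ in the scalar MGF bound), precisely as you anticipated. That bookkeeping is where the work is, and your proposal correctly isolates it without yet carrying it out.
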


\vgap

We are now ready to describe the main convergence properties of the $2$-RSG method.
More specifically, Theorem~\ref{2RSG_theorem}.a) below shows the convergence
rate of this algorithm for a given set of parameters $(S,N,T)$, while Theorem~\ref{2RSG_theorem}.b)
establishes the complexity of the $2$-RSG method for computing an
$(\epsilon, \Lambda)$-solution of problem \eqnok{NLP}.

\begin{theorem} \label{2RSG_theorem}
Under Assumption A1, the following statements hold for the $2$-RSG method applied
to problem \eqnok{NLP}.
\begin{itemize}
\item [a)] Let ${\cal B}_N$ be defined in \eqnok{nocvx_smooth}. We have
\beq \label{2RSG_conv1}
\prob\left\{
\|\nabla f(\bar x^*)\|^2 \ge 2 \left(4 L {\cal B}_N + \frac{3 \lambda \sigma^2}{T} \right)
\right\} \le \frac{S+1}{\lambda} + 2^{-S}, \ \
\forall \, \lambda > 0;
\eeq
\item [b)] Let $\epsilon > 0$ and $\Lambda \in (0,1)$ be given. If the parameters $(S,N,T)$ are set to
\beqa
S &=& S(\Lambda) := \left \lceil \log (2/ \Lambda) \right \rceil, \label{def_S}\\
N &=& N(\epsilon) := \left \lceil \max \left\{ \frac{32 L^2 D_f^2}{\epsilon}, \left[32 L \left( \tilde D + \frac{D_f^2}{\tilde D}\right)
\frac{\sigma}{\epsilon} \right]^2 \right\} \right \rceil, \label{def_N} \\
T &=& T(\epsilon,\Lambda):= \left \lceil \frac{24 (S+1) \sigma^2}{\Lambda \epsilon} \right \rceil,\label{def_Tbar}
\eeqa
then the $2$-RSG method can compute an $(\epsilon, \Lambda)$-solution of problem \eqnok{NLP}
after taking at most
\beq \label{bnd_compl}
S(\Lambda) \, \left[ N(\epsilon) + T(\epsilon, \Lambda)\right]
\eeq
calls to the stochastic first-order oracle.
\end{itemize}
\end{theorem}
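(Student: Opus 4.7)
The plan is to control $\|\nabla f(\bar x^*)\|$, which is not directly observed, by decomposing it into two pieces we can bound separately: the smallest true-gradient norm over the $S$ independent RSG candidates, and the Monte Carlo errors $\|g(\bar x_s)-\nabla f(\bar x_s)\|$ entering the selection rule \eqnok{post_opt}. Let $s^*$ denote the (random) minimizer of $\|\nabla f(\bar x_s)\|$ over $s=1,\ldots,S$. I would first apply two triangle inequalities together with $\|g(\bar x^*)\|\le\|g(\bar x_{s^*})\|$ (which holds by the definition of $\bar x^*$) to obtain
\begin{align*}
\|\nabla f(\bar x^*)\| &\le \|g(\bar x^*)\| + \|\nabla f(\bar x^*) - g(\bar x^*)\| \\
&\le \|\nabla f(\bar x_{s^*})\| + \|g(\bar x_{s^*})-\nabla f(\bar x_{s^*})\| + \|\nabla f(\bar x^*) - g(\bar x^*)\|.
\end{align*}
Squaring and applying an inequality of the form $(a+b+c)^2 \le 4a^2 + 4b^2 + 2c^2$ would yield the pointwise bound
\[
\|\nabla f(\bar x^*)\|^2 \le 4\|\nabla f(\bar x_{s^*})\|^2 + 6\max_{s=1,\ldots,S}\|g(\bar x_s)-\nabla f(\bar x_s)\|^2,
\]
which cleanly separates the optimization-phase randomness from the post-optimization sampling randomness.

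\textbf{Bounding the two terms; the main obstacle.} For the first term I would use Corollary~\ref{noncvx_smth_cor} to get $\bbe\|\nabla f(\bar x_s)\|^2\le L\cB_N$, and Markov's inequality to get $\prob\{\|\nabla f(\bar x_s)\|^2\ge 2L\cB_N\}\le 1/2$; since the $S$ RSG runs are mutually independent, it follows that $\prob\{\|\nabla f(\bar x_{s^*})\|^2\ge 2L\cB_N\}\le 2^{-S}$. For the sampling errors, conditional on $\bar x_s$ the vectors $G(\bar x_s,\xi_k)-\nabla f(\bar x_s)$ form a zero-mean sequence with per-term variance at most $\sigma^2$ by Assumption~A1, so Lemma~\ref{marting}(a) applied to the $T$-sample average yields $\prob\{\|g(\bar x_s)-\nabla f(\bar x_s)\|^2\ge\lambda\sigma^2/T\}\le 1/\lambda$; a union bound over $s=1,\ldots,S$, combined with the event at $\bar x^*$, produces a tail of order $(S+1)/\lambda$. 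Adding the two failure probabilities gives \eqnok{2RSG_conv1}. The delicate point is that $\bar x^*$ is itself a measurable function of the post-optimization samples, so concentration cannot be invoked at the single random index $\bar x^*$; the remedy is the uniform bound over the $S$ candidates, which is affordable because $S$ will be only ${\cal O}(\log(1/\Lambda))$.

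\textbf{Plan for part (b).} With part~(a) in hand, part~(b) is a calibration: split the failure budget $\Lambda$ equally between the two terms of \eqnok{2RSG_conv1}, and split the error budget $\epsilon$ equally between the two terms inside the event. I would set $S$ so that $2^{-S}\le\Lambda/2$, which forces $S=\lceil\log(2/\Lambda)\rceil$ as in \eqnok{def_S}; choose $\lambda=2(S+1)/\Lambda$ so that $(S+1)/\lambda=\Lambda/2$; then take $T$ large enough that $6\lambda\sigma^2/T\le\epsilon/2$, which with this $\lambda$ reproduces \eqnok{def_Tbar}; and finally take $N$ large enough that $8L\cB_N\le\epsilon/2$, splitting this as $L^2 D_f^2/N\le\epsilon/32$ and $L(\tilde D+D_f^2/\tilde D)\sigma/\sqrt N\le\epsilon/32$, producing the two arguments of the max in \eqnok{def_N}. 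Summing $S\cdot N$ calls from the optimization phase and $S\cdot T$ calls from the averaging in \eqnok{post_opt} yields the total \eqnok{bnd_compl}; nothing beyond bookkeeping remains once the dependence of $\bar x^*$ on the post-optimization samples has been handled via the union bound.
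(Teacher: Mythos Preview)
Your proposal is correct and follows essentially the same route as the paper. The only cosmetic difference is in the pointwise decomposition: the paper applies $(a+b)^2\le 2a^2+2b^2$ twice, first to $\|g(\bar x^*)\|^2$ and then to $\|\nabla f(\bar x^*)\|^2$, arriving at
\[
\|\nabla f(\bar x^*)\|^2 \le 4\min_{s}\|\nabla f(\bar x_s)\|^2 + 4\max_{s}\|g(\bar x_s)-\nabla f(\bar x_s)\|^2 + 2\|g(\bar x^*)-\nabla f(\bar x^*)\|^2,
\]
whereas you go through the index $s^*$ and the inequality $(a+b+c)^2\le 4a^2+4b^2+2c^2$; after absorbing the $\bar x^*$ error into the max (legitimate since $\bar x^*\in\{\bar x_1,\ldots,\bar x_S\}$) both routes give the same threshold $8L\cB_N+6\lambda\sigma^2/T$. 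In fact your merged bound yields a tail of $S/\lambda+2^{-S}$, slightly sharper than the stated $(S+1)/\lambda+2^{-S}$, so your ``$+1$'' is harmless slack. Your part~(b) calibration and the handling of the measurability of $\bar x^*$ via the union bound are exactly as in the paper.
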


\begin{proof}
We first show part a).
Observe that by the definition of $\bar x^*$ in \eqnok{post_opt}, we have
\beqas
\|g(\bar x^*)\|^2 &=& \min_{s=1,\ldots,S} \|g(\bar x_s)\|^2
= \min_{s=1,\ldots,S} \|\nabla f(\bar x_s) + g(\bar x_s) - \nabla f(\bar x_s)\|^2 \\
&\le& \min_{s=1, \ldots,S} \left\{2 \|\nabla f(\bar x_s)\|^2 + 2
\|g(\bar x_s) - \nabla f(\bar x_s) \|^2 \right\} \\
&\le& 2 \min_{s=1, \ldots,S} \|\nabla f(\bar x_s)\|^2 +
2 \max_{s=1,\ldots,S} \|g(\bar x_s) - \nabla f(\bar x_s) \|^2,
\eeqas
which implies that
\beqa
\|\nabla f(\bar x^*)\|^2 &\le& 2 \|g(\bar x^*)\|^2 + 2 \|\nabla f(\bar x^*) - g(\bar x^*)\|^2
\le 4
\min_{s=1, \ldots,S} \|\nabla f(\bar x_s)\|^2  \nn \\
&& \,\, + \,
4 \max_{s=1,\ldots,S} \|g(\bar x_s) - \nabla f(\bar x_s) \|^2 +
 2 \|\nabla f(\bar x^*) - g(\bar x^*)\|^2 . \label{rel_post}
\eeqa
We now provide certain probabilistic upper bounds to the three terms in the
right hand side of the above inequality. Firstly, using the fact
that $\bar x_s$, $1\le s \le S$, are independent and
relation \eqnok{nocvx_smooth_prob} (with $\lambda = 2$), we have
\beq \label{opt_phase_result}
\prob \left\{
\min_{s= 1, \ldots, S} \|\nabla f(\bar x_s)\|^2 \ge
2 L \cB_N \right\} = \prod_{s=1}^S \prob\left\{\|\nabla f(\bar x_s)\|^2 \ge
2 L \cB_N \right\} \le 2^{-S}.
\eeq
Moreover, denoting $\delta_{s,k} = G(\bar x_s, \xi_k) - \nabla f(\bar x_s)$, $k = 1, \ldots, T$,
we have $g(\bar x_s) - \nabla f(\bar x_s) = \sum_{k=1}^T \delta_{s,k} / T$. Using this observation,
Assumption A1 and Lemma~\ref{marting}.a), we conclude that, for any $s = 1, \ldots, S$,
\[
\prob\left\{ \|g(\bar x_s) - \nabla f(\bar x_s)\|^2 \ge  \frac{\lambda \sigma^2}{T} \right\}
= \prob\left\{ \|\sum_{k=1}^T \delta_{s,k} \|^2 \ge  \lambda T \sigma^2 \right\}
\le \frac{1}{\lambda}, \ \forall \lambda > 0,
\]
which implies that
\beq \label{closeness1}
\prob\left\{
\max_{s=1,\ldots,S} \|g(\bar x_s) - \nabla f(\bar x_s)\|^2 \ge \frac{\lambda \sigma^2}{T}
 \right\} \le \frac{S}{\lambda},  \ \forall \lambda > 0,
\eeq
and that
\beq \label{closeness2}
\prob\left\{
\|g(\bar x^*) - \nabla f(\bar x^*)\|^2  \ge  \frac{\lambda \sigma^2}{T}
\right\} \le \frac{1}{\lambda}, \ \forall \lambda > 0.
\eeq
The result then follows by combining relations \eqnok{rel_post}, \eqnok{opt_phase_result},
\eqnok{closeness1} and \eqnok{closeness2}.

We now show that part b) holds.
Since the $2$-RSG method needs to call the RSG method $S$ times with iteration limit $N(\epsilon)$
in the optimization phase, and estimate the gradients $g(\bar x_s)$, $s = 1, \ldots, S$ with sample
size $T(\epsilon)$ in the post-optimization phase, the total number of calls to the stochastic first-order oracle is bounded
by $S [N(\epsilon) + T(\epsilon)]$. It remains to show that $\bar x^*$ is an $(\epsilon, \Lambda)$-solution
of problem \eqnok{NLP}. Noting that by the definitions of $\cB_N$ and $N(\epsilon)$, respectively,
in \eqnok{nocvx_smooth} and \eqnok{def_N}, we have
\[
\cB_{N(\epsilon)} = \frac{L D_f^2}{N(\epsilon)} +
\left( \tilde D + \frac{D_f^2}{\tilde D}\right) \frac{\sigma}{\sqrt{N(\epsilon)}}
\le \frac{\epsilon}{32L} + \frac{\epsilon}{32L} = \frac{\epsilon}{16L}.
\]
Using the above observation, \eqnok{def_Tbar} and setting $\lambda = [2 (S+1)]/\Lambda$ in \eqnok{2RSG_conv1},
we have
\[
4 L B_{N(\epsilon)} + \frac{3 \lambda \sigma^2}{T(\epsilon)}
= \frac{\epsilon}{4} + \frac{\lambda \Lambda \epsilon}{8 (S+1)} = \frac{\epsilon}{2},
\]
which, together with relations  \eqnok{2RSG_conv1} and \eqnok{def_S},
and the selection of $\lambda$, then imply that
\[
\prob\left\{\|\nabla f(\bar x^*)\|^2  \ge \epsilon \right\} \le \frac{\Lambda}{2} +
2^{-S} \le \Lambda.
\]
\end{proof}

\vgap

It is interesting to compare the complexity bound in \eqnok{bnd_compl} with the one in
\eqnok{nocvx_smooth_prob1}. In view of \eqnok{def_S}, \eqnok{def_N} and \eqnok{def_Tbar},
the complexity bound in \eqnok{bnd_compl},
after disregarding a few constant factors, is equivalent to
\beq \label{improved_compl}
{\cal O} \left\{
\frac{\log (1/\Lambda) }{\epsilon}  +
\frac{\sigma^2}{\epsilon^2} \log \frac{1}{\Lambda}
+ \frac{\log^2(1/\Lambda) \sigma^2 }{\Lambda \epsilon}
\right\}.
\eeq
The above bound can be considerably smaller than the one in \eqnok{nocvx_smooth_prob1}
up to a factor of
$
1 / \left[\Lambda^2 \log(1/\Lambda)  \right],
$
when the second terms are the dominating ones in both bounds.

\vgap

The following result shows that the bound \eqnok{bnd_compl} obtained in Theorem~\ref{2RSG_theorem} can
be further improved under certain light-tail assumption of $\SFO$.

\begin{corollary}
Under Assumptions A1 and A2, the following statements hold for the $2$-RSG method
applied to problem \eqnok{NLP}.
\begin{itemize}
\item [a)] Let ${\cal B}_N$ is defined in \eqnok{nocvx_smooth}. We have, $\forall \, \lambda > 0$,
\beq \label{2RSG_conv1_1}
\prob\left\{
\|\nabla f(\bar x^*)\|^2 \ge 4 \left[ 2 L \cB_N + 3 (1 + \lambda)^2 \frac{\sigma^2}{T} \right]
\right\} \le (S+1) \exp(-\lambda^2/3) + 2^{-S};
\eeq
\item [b)] Let $\epsilon > 0$ and $\Lambda \in (0,1)$ be given.
If $S$ and $N$ are set to $S(\Lambda)$ and $N(\epsilon)$ as in
\eqnok{def_S} and \eqnok{def_N}, respectively, and
the sample size $T$ is set to
\beq \label{def_Tbar1}
T = T'(\epsilon, \Lambda):= \frac{24 \sigma^2}{\epsilon}\left[1 + \left(3 \ln \frac{2(S+1)}{\Lambda} \right)^\frac{1}{2} \right]^2,
\eeq
then the $2$-RSG method can compute an $(\epsilon, \Lambda)$-solution of problem \eqnok{NLP}
in at most
\beq \label{bnd_comp2}
S(\Lambda) \left[ N(\epsilon) + T'(\epsilon, \Lambda)\right]
\eeq
calls to the stochastic first-order oracle.
\end{itemize}
\end{corollary}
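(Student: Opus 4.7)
The plan is to mirror the proof of Theorem~\ref{2RSG_theorem} almost step for step, replacing every use of the $L^2$ martingale bound Lemma~\ref{marting}.a) by the sub-Gaussian bound Lemma~\ref{marting}.b), whose hypothesis is supplied by Assumption~A2. In particular, the three-term decomposition \eqnok{rel_post} of $\|\nabla f(\bar x^*)\|^2$ is retained verbatim, so part a) reduces to obtaining high-probability bounds on: (i) $\min_{s}\|\nabla f(\bar x_s)\|^2$, (ii) $\max_{s}\|g(\bar x_s)-\nabla f(\bar x_s)\|^2$, and (iii) $\|g(\bar x^*)-\nabla f(\bar x^*)\|^2$.

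For (i) I would simply quote \eqnok{opt_phase_result}, which already gives $\prob\{\min_s \|\nabla f(\bar x_s)\|^2 \ge 2L\cB_N\} \le 2^{-S}$ using independence across the $S$ RSG runs, and does not depend on which tail assumption is in force. For (ii) and (iii), I would write $g(\bar x_s)-\nabla f(\bar x_s) = \frac{1}{T}\sum_{k=1}^T \delta_{s,k}$ with $\delta_{s,k} := G(\bar x_s,\xi_k)-\nabla f(\bar x_s)$, note that conditionally on $\bar x_s$ these form a martingale-difference sequence for which Assumption~A2 supplies the hypothesis $\bbe[\exp(\|\delta_{s,k}\|^2/\sigma^2)\mid\cdot]\le e$, and apply Lemma~\ref{marting}.b) to get, for each fixed $s$, $\prob\{\|g(\bar x_s)-\nabla f(\bar x_s)\|^2 \ge 2(1+\lambda)^2\sigma^2/T\} \le \exp(-\lambda^2/3)$. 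A union bound over $s=1,\ldots,S$ handles (ii); the same single-index estimate applied to $\bar x^*$ handles (iii). Plugging these three bounds into \eqnok{rel_post} and collecting constants yields $\|\nabla f(\bar x^*)\|^2 \le 8L\cB_N + 12(1+\lambda)^2\sigma^2/T$ with overall failure probability at most $2^{-S} + (S+1)\exp(-\lambda^2/3)$, which is exactly \eqnok{2RSG_conv1_1}.

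For part b) I would calibrate parameters so that each of the two failure terms contributes at most $\Lambda/2$. Taking $S=S(\Lambda)$ as in \eqnok{def_S} forces $2^{-S}\le \Lambda/2$, and choosing $\lambda$ so that $(S+1)\exp(-\lambda^2/3)=\Lambda/2$ gives $\lambda = \sqrt{3\ln(2(S+1)/\Lambda)}$. The choice $N=N(\epsilon)$ from \eqnok{def_N} is unchanged from Theorem~\ref{2RSG_theorem} and guarantees $8L\cB_{N(\epsilon)}\le\epsilon/2$ by the same computation as there; solving $12(1+\lambda)^2\sigma^2/T \le \epsilon/2$ with this $\lambda$ delivers exactly the sample size $T'(\epsilon,\Lambda)$ in \eqnok{def_Tbar1}. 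The oracle-call count $S(\Lambda)[N(\epsilon)+T'(\epsilon,\Lambda)]$ then follows by the same accounting as in Theorem~\ref{2RSG_theorem}.b), since the optimization phase uses $S\cdot N$ samples and the post-optimization phase uses $S\cdot T$.

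No genuine obstacle is expected; the argument is structurally identical to Theorem~\ref{2RSG_theorem} and the only thing to watch is constant-bookkeeping, so that the factor $4$ coming from the $2\|\cdot\|^2 + 2\|\cdot\|^2$ splittings in \eqnok{rel_post} combines cleanly with the $2(1+\lambda)^2$ produced by Lemma~\ref{marting}.b) to give the $3(1+\lambda)^2$ displayed in \eqnok{2RSG_conv1_1}, and that the calibration of $\lambda$ in part b) correctly produces the square-bracketed expression in \eqnok{def_Tbar1}.
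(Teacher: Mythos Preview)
Your proposal is correct and follows essentially the same route as the paper: the paper retains the decomposition \eqnok{rel_post} and the bound \eqnok{opt_phase_result} verbatim, replaces the invocation of Lemma~\ref{marting}.a) by Lemma~\ref{marting}.b) to obtain $\prob\{\|g(\bar x_s)-\nabla f(\bar x_s)\|^2 \ge 2(1+\lambda)^2\sigma^2/T\}\le\exp(-\lambda^2/3)$, union-bounds over $s$, and combines; for part b) it simply says the argument is analogous to Theorem~\ref{2RSG_theorem}.b), which is exactly the calibration you spell out.
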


\begin{proof}
We provide the proof of part a) only, since part b) follows immediately from part a) and
an argument similar to the one used in the proof of Theorem~\ref{2RSG_theorem}.b).
Denoting $\delta_{s,k} = G(\bar x_s, \xi_k) - \nabla f(\bar x_s)$, $k = 1, \ldots, T$,
we have $g(\bar x_s) - \nabla f(\bar x_s) = \sum_{k=1}^T \delta_{s,k} / T$. Using this observation,
Assumption A2 and Lemma~\ref{marting}.b), we conclude that, for any $s = 1, \ldots, S$ and $\lambda > 0$,
\[
\begin{array}{ccc}
\prob\left\{ \|g(\bar x_s) - \nabla f(\bar x_s)\|^2 \ge  2(1+\lambda)^2 \frac{\sigma^2}{T} \right\} &&\\
= \prob\left\{ \|\sum_{k=1}^T \delta_{s,k} \| \ge  \sqrt{2T} (1+\lambda) \sigma \right\}
\le \exp(-\lambda^2/3),
\end{array}
\]
which implies that
\beq \label{closeness1_p}
\prob\left\{
\max_{s=1,\ldots,S} \|g(\bar x_s) - \nabla f(\bar x_s)\|^2 \ge 2(1+\lambda)^2 \frac{\sigma^2}{T}
 \right\} \le S \exp(-\lambda^2/3),  \ \forall \lambda > 0.
\eeq
and that
\beq \label{closeness2_p}
\prob\left\{
\|g(\bar x^*) - \nabla f(\bar x^*)\|^2  \ge  2(1+\lambda)^2 \frac{\sigma^2}{T}
\right\} \le \exp(-\lambda^2/3), \ \forall \lambda > 0.
\eeq
The result in part a) then follows by combining relations \eqnok{rel_post}, \eqnok{opt_phase_result},
\eqnok{closeness1_p} and \eqnok{closeness2_p}.
\end{proof}

\vgap

In view of \eqnok{def_S}, \eqnok{def_N} and \eqnok{def_Tbar1}, the bound in \eqnok{bnd_comp2},
after disregarding a few constant factors, is equivalent to
\beq \label{improved_compl1}
{\cal O} \left\{
\frac{\log (1/\Lambda) }{\epsilon} +
\frac{\sigma^2}{\epsilon^2}  \log \frac{1}{\Lambda}
+ \frac{\log^2(1/\Lambda) \sigma^2 }{\epsilon}
\right\}.
\eeq
Clearly, the third term of the above bound is significantly
smaller than the corresponding one in \eqnok{improved_compl}
by a factor of $1/\Lambda$.

\setcounter{equation}{0}
\section{Stochastic zeroth-order methods} \label{sec_zero}
Our problem of interest in this section is problem \eqnok{NLP}
with $f$ given in \eqnok{case1}, i.e.,
\beq \label{NLP1}
f^* := \inf_{x \in \bbr^n} \left\{ f(x) := \int_{\Xi} F(x,\xi) dP(\xi) \right\}.
\eeq
Moreover, we
assume that $F(x, \xi) \in {\cal C}_L^{1,1}(\bbr^n)$ almost surely,
which clearly implies $f(x) \in {\cal C}_L^{1,1}(\bbr^n)$.
Our goal in this section is to specialize the RSG and 2-RSG method,
respectively, in Subsections~\ref{sec_RSGF} and \ref{sec_2RSGF},
to deal with the situation when only stochastic zeroth-order information of $f$ is available.

\subsection{The randomized stochastic gradient free method} \label{sec_RSGF}
Throughout this section, we assume that $f$ is represented by
 a {\sl stochastic zeroth-order oracle ($\cal SZO$)}.
More specifically, at the $k$-th iteration,  $x_k$ and $\xi_k$ being the input,
the $\cal SZO$ outputs the quantity $F(x_k, \xi_k)$ such that
the following assumption holds:

\vgap

{\bf A3:} For any $k \ge 1$, we have
\beqa
\, \, \bbe[F(x_k, \xi_k)] = f(x_k). \label{ass3.a}
\eeqa

To exploit zeroth-order information, we consider a smooth approximation of the objective function $f$.
It is well-known (see, e.g., \cite{RocWet98}, \cite{DuBaMaWa11} and \cite{YoNeSh12}) that the convolution of
$f$ with any nonnegative, measurable and bounded function $\psi: \bbr^n  \to \bbr$ satisfying $\int_{\bbr^n} \psi(u) \,du = 1$ is an
approximation of $f$ which is at least as smooth as $f$. One of the most important examples of the function
$\psi$ is the probability density function. Here, we use the Gaussian distribution in
the convolution. Let $u$ be $n$-dimensional standard Gaussian random vector and $\mu>0$ be the smoothing parameter.
Then, a smooth approximation of $f$ is defined  as
\beq \label{rand_smooth_func}
f_{\mu}(x) = \frac{1}{(2 \pi)^{\frac{n}{2}}} \int f(x+\mu u) e^{-\frac{1}{2}\|u\|^2} \,du =\bbe_u[f(x+\mu u)].
\eeq
The following result due to Nesterov \cite{Nest11-1} describes some properties of $f_{\mu}(\cdot)$.

\begin{theorem} \label{smth_approx}
The following statements hold for any $f \in {\cal C}^{1,1}_L$.
\begin{itemize}
\item [a)] The gradient of $f_{\mu}$ given by
\beq \label{smth_approx_grad}
\nabla f_{\mu}(x) = \frac{1}{(2 \pi)^{\frac{n}{2}}} \int \frac{f(x+\mu u)-f(x)}{\mu} u e^{-\frac{1}{2}\|u\|^2} \,du,
\eeq
is Lipschitz continuous with constant $L_{\mu}$ such that $L_{\mu} \le L$;
\item [b)] For any $x \in \bbr^n$,
\beqa
|f_{\mu}(x)-f(x)| &\le& \frac{\mu^2}{2} L n, \label{rand_smth_close}\\
\|\nabla f_{\mu}(x) - \nabla f(x)\| &\le& \frac{\mu}{2}L (n+3)^{\frac{3}{2}}; \label{grad_smth_close}
\eeqa
\item [c)]
For any $x \in \bbr^n$,
\beq \label{stoch_smth_approx_grad}
\frac{1}{\mu^2}\bbe_u[\{f(x+\mu u)-f(x)\}^2\|u\|^2] \le \frac{ \mu^2}{2}L^2(n+6)^3 + 2(n+4)\|\nabla f(x)\|^2.
\eeq
\end{itemize}

\end{theorem}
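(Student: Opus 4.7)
The plan is to exploit two equivalent representations for $\nabla f_\mu$. The first, $\nabla f_\mu(x) = \bbe_u[\nabla f(x+\mu u)]$, is obtained by differentiating under the integral in \eqref{rand_smooth_func}. The second is the Stein-type formula \eqref{smth_approx_grad}, which follows from a change of variables $v = x+\mu u$, differentiating the Gaussian kernel in $x$, and subtracting the constant $f(x)$ (legitimate because the odd Gaussian moment $\bbe_u[u]$ vanishes). Part (a) then follows immediately from the first representation: by Jensen's inequality and the $L$-Lipschitzness of $\nabla f$,
\[
\|\nabla f_\mu(y) - \nabla f_\mu(x)\| \le \bbe_u[\|\nabla f(y+\mu u) - \nabla f(x+\mu u)\|] \le L\|y-x\|,
\]
so $L_\mu \le L$.

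For part (b), I would apply the smoothness inequality \eqref{smooth} pointwise in $u$. For the function-value bound, \eqref{smooth} gives $|f(x+\mu u)-f(x)-\mu\langle \nabla f(x),u\rangle| \le \tfrac{L\mu^2}{2}\|u\|^2$; taking Gaussian expectation and using $\bbe_u[u]=0$ together with $\bbe_u[\|u\|^2] = n$ yields \eqref{rand_smth_close}. For the gradient bound, I would combine the Stein formula with the identity $\nabla f(x) = \bbe_u[\langle \nabla f(x),u\rangle u]$ (valid since $\bbe_u[uu^\top]=I$) to write
\[
\nabla f_\mu(x)-\nabla f(x) = \bbe_u\!\left[\tfrac{f(x+\mu u)-f(x)-\mu\langle \nabla f(x),u\rangle}{\mu}\, u\right],
\]
apply \eqref{smooth} to the scalar inside the bracket, and invoke Jensen's inequality to reduce matters to the Gaussian moment estimate $\bbe_u[\|u\|^3] \le (n+3)^{3/2}$.

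For part (c), I would split $f(x+\mu u)-f(x) = \mu\langle \nabla f(x),u\rangle + r(x,u)$ with $|r(x,u)| \le \tfrac{L\mu^2}{2}\|u\|^2$ by \eqref{smooth}, square, apply $(a+b)^2 \le 2a^2+2b^2$, and divide by $\mu^2$ to obtain
\[
\tfrac{\{f(x+\mu u)-f(x)\}^2}{\mu^2}\|u\|^2 \le 2\langle \nabla f(x),u\rangle^2\|u\|^2 + \tfrac{L^2\mu^2}{2}\|u\|^6.
\]
A direct coordinate expansion, using that odd Gaussian moments vanish, yields $\bbe_u[\langle g,u\rangle^2\|u\|^2] = (n+2)\|g\|^2 \le (n+4)\|g\|^2$, which handles the first term. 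The genuine technical obstacle, common to parts (b) and (c), is then the sharp Gaussian moment bounds $\bbe_u[\|u\|^3] \le (n+3)^{3/2}$ and $\bbe_u[\|u\|^6] \le (n+6)^3$. These can be obtained from the general estimate $\bbe_u[\|u\|^p] \le (n+p)^{p/2}$, which I would prove by combining the closed-form chi-squared moments $\bbe[\|u\|^{2k}] = n(n+2)\cdots(n+2k-2)$ for integer $k$ with Jensen's inequality applied to $t \mapsto t^{p/2}$ to interpolate for non-even exponents.
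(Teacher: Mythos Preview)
The paper does not actually prove Theorem~\ref{smth_approx}; it merely states the result and attributes it to Nesterov~\cite{Nest11-1}. So there is no ``paper's own proof'' to compare against---your proposal is in effect a reconstruction of Nesterov's argument, and it is a faithful one. The two integral representations of $\nabla f_\mu$, the pointwise application of the descent inequality \eqnok{smooth} inside the Gaussian expectation, the Stein-type identity $\nabla f(x)=\bbe_u[\langle\nabla f(x),u\rangle u]$, the split-and-square for part~(c), and the reduction of everything to the moment bounds $\bbe_u[\|u\|^p]\le(n+p)^{p/2}$ are precisely the ingredients Nesterov uses.

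One small remark on the moment bounds, since your interpolation sketch is a bit loose as written: for the two exponents you actually need, no interpolation is really necessary. For $p=6$ the closed form $\bbe_u[\|u\|^6]=n(n+2)(n+4)$ and AM--GM give $n(n+2)(n+4)\le(n+2)^3\le(n+6)^3$ directly. For $p=3$, the power-mean (Lyapunov) inequality yields $\bbe_u[\|u\|^3]\le\bigl(\bbe_u[\|u\|^4]\bigr)^{3/4}=(n(n+2))^{3/4}$, and since $n(n+2)\le(n+3)^2$ this is at most $(n+3)^{3/2}$. With that adjustment your argument is complete.
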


It immediately follows from \eqnok{grad_smth_close} that
\beqa
\|\nabla f_{\mu}(x)\|^2 &\le& 2 \|\nabla f(x)\|^2+ \frac{ \mu^2}{2} L^2 (n+3)^3, \label{grad_smth_close2}\\
\|\nabla f(x)\|^2 &\le& 2\|\nabla f_{\mu}(x)\|^2+ \frac{\mu^2}{2} L^2 (n+3)^3.
\eeqa
Moreover, denoting
\beq \label{def_f_mu_s}
f_\mu^* := \min_{x \in \bbr^n} f_\mu(x),
\eeq
we conclude from \eqnok{rand_smth_close} that $|f_\mu^* - f^*| \le \mu^2 L n / 2$ and hence
that
\beq \label{closef}
 - \mu^2 L n \le [f_\mu(x) - f_\mu^*] - [f(x) - f^*] \le \mu^2 L n.
\eeq
\vgap

Below we modify the RSG method in subsection \eqnok{sec_RSG}
to use stochastic zeroth-order rather than first-order information for solving problem \eqnok{NLP1}.

\vgap

\noindent {\bf A randomized stochastic gradient free (RSGF) method}
\begin{itemize}
\item [] {\bf Input:} Initial point $x_1$,
iteration limit $N$, stepsizes $\{\gamma_k\}_{k \ge 1}$, probability mass function $P_{R}(\cdot)$
supported on $\{1, \ldots, N\}$.
\item[] {\bf Step} $0$. Let $R$ be a random variable with probability mass function $P_{R}$.
\item [] {\bf Step} $k = 1, \ldots, R$. Generate $u_k$ by Gaussian random vector generator and
call the stochastic zeroth-order oracle for computing $G_{\mu}(x_k, \xi_k, u_k)$ given by
\beq \label{grad_free_def}
G_{\mu}(x_k, \xi_k, u_k)=\frac{F(x_k+\mu u_k, \xi_k)-F(x_k,\xi_k)}{\mu}u_k.
\eeq
Set
\beq \label{update_SGF}
x_{k+1} = x_k - \gamma_k G_{\mu}(x_k, \xi_k, u_k).
\eeq
\item[]  {\bf Output} $x_R$.
\end{itemize}

\vgap

Note that the esimator $G_{\mu}(x_k, \xi_k, u_k)$ of $\nabla f_\mu(x_k)$ in \eqnok{grad_free_def}
was suggested by Nesterov in \cite{Nest11-1}. Indeed, by \eqnok{smth_approx_grad} and Assumption A3, we have

\beq \label{def_ex_G_mu}
\bbe_{\xi, u}[G_{\mu}(x, \xi, u)] = \bbe_u \left[ \bbe_{\xi}[G_{\mu}(x, \xi, u)|u] \right] = \nabla f_{\mu}(x),
\eeq
which implies that $G_{\mu}(x, \xi, u)$ is an unbiased estimator of $\nabla f_{\mu}(x)$.
Hence, if the variance $\tilde \sigma^2 \equiv \bbe_{\xi, u}[\|G_{\mu}(x, \xi, u)-\nabla f_{\mu}(x)\|^2]$
is bounded, we can directly apply the convergence results in Theorem~\ref{noncvx_smth_theorem}
to the above RSGF method. However, there still exist a few problems in this approach. Firstly, we
do not know an explicit expression of the bound $\tilde \sigma^2$.
Secondly, this approach does not provide any information
regarding how to appropriately specify the smoothing parameter $\mu$.
The latter issue is critical for the implementation of the
RSGF method.

By applying the approximation results in Theorem \ref{smth_approx} to
the functions $F(\cdot, \xi_k)$, $k = 1, \ldots, N$, and using a slightly
different convergence analysis than the one in Theorem~\ref{noncvx_smth_theorem},
we are able to obtain much refined convergence results for the above RSGF method.

\begin{theorem} \label{zero-noncvx_smth_theorem}
Suppose that the stepsizes $\{\gamma_k\}$ and
the probability mass function $P_R(\cdot)$
in the RSGF method are chosen such that $\gamma_k < 1/[2(n+4)L]$ and
\beq \label{prob_fun2}
P_R(k) := \prob\{R=k\} = \frac{\gamma_k - 2 L(n+4) \gamma_k^2}
{{\sum_{k=1}^N \left[\gamma_k - 2 L(n+4) \gamma_k^2\right]} }, \ \ k= 1,...,N.
\eeq
Then, under Assumptions A1 and A3,
\begin{itemize}
\item [a)] for any $N \ge 1$, we have
\beq \label{zero_noncvx_smth_converg}
\begin{array}{l}
\frac{1}{L} \bbe[\|\nabla f(x_R)\|^2]
\le \frac{1}{\sum_{k=1}^N \left[\gamma_k - 2 L(n+4) \gamma_k^2\right]} \left [ D_f^2 + 2 \mu^2 (n+4) \right.\\
\ \ \ \ \left. \left(1+ L(n+4)^2 \sum_{k=1}^N (\frac{\gamma_k}{4}+L \gamma_k^2)\right)
+ 2(n+4)\sigma^2 \sum_{k=1}^N \gamma_k^2 \right],
\end{array}
\eeq
where the expectation is taken with respect to $R$, $\xi_{[N]}$ and $u_{[N]}$,
and $D_f$ is defined in\eqnok{def_Df};

\item [b)] if, in addition, problem \eqnok{NLP1} is convex with an optimal solution $x^*$, then, for any $N \ge 1$,
\beq \label{zero_cvx_smth_converg}
\begin{array}{l}
\bbe[f(x_R) - f^*] \le \frac{1}{2\sum_{k=1}^N \left[\gamma_k - 2 (n+4)L \gamma_k^2\right]}
\left[ D_X^2+ 2\mu^2 L (n+4) \right.\\
\ \ \ \ \left. \sum_{k=1}^N \left[\gamma_k + L (n+4)^2 \gamma_k^2 \right]+ 2(n+4) \sigma^2 \sum_{k=1}^N \gamma_k^2 \right],
\end{array}
\eeq
where the expectation is taken with respect to $R$, $\xi_{[N]}$ and $u_{[N]}$, and
$D_X$ is defined in \eqnok{def_Dx}.

\end{itemize}

\end{theorem}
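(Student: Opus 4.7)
The plan is to mimic Theorem~\ref{noncvx_smth_theorem}, working with the Gaussian smoothing $f_\mu$ in place of $f$ and converting back via Theorem~\ref{smth_approx}. Two structural facts make this substitution clean: $G_\mu(x_k,\xi_k,u_k)$ is an unbiased estimator of $\nabla f_\mu(x_k)$ by \eqnok{def_ex_G_mu}, and $f_\mu \in {\cal C}_{L_\mu}^{1,1}(\bbr^n) \subseteq {\cal C}_L^{1,1}(\bbr^n)$ by Theorem~\ref{smth_approx}.a. The key new ingredient is the second-moment bound
\[
\bbe_{\xi_k,u_k}\left[\|G_\mu(x_k,\xi_k,u_k)\|^2\right] \le \frac{\mu^2 L^2(n+6)^3}{2} + 2(n+4)\|\nabla f(x_k)\|^2 + 2(n+4)\sigma^2,
\]
which I derive by applying \eqnok{stoch_smth_approx_grad} to $F(\cdot,\xi_k)$ (which is in ${\cal C}_L^{1,1}$ almost surely by assumption) and then averaging over $\xi_k$ via Assumption A1 in the form $\bbe_\xi[\|\nabla F(x,\xi)\|^2] \le \|\nabla f(x)\|^2 + \sigma^2$.

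For part (a), I apply \eqnok{smooth} to $f_\mu$ along the RSGF iterate \eqnok{update_SGF} and take conditional expectation in $(\xi_k,u_k)$ to obtain
\[
\bbe_{\xi_k, u_k}[f_\mu(x_{k+1})] - f_\mu(x_k) \le -\gamma_k\|\nabla f_\mu(x_k)\|^2 + \frac{L\gamma_k^2}{2}\bbe_{\xi_k, u_k}[\|G_\mu\|^2].
\]
After substituting the second-moment bound and converting $\|\nabla f_\mu\|^2$ back to $\|\nabla f\|^2$ via $\|\nabla f_\mu(x)\|^2 \ge \frac{1}{2}\|\nabla f(x)\|^2 - \frac{\mu^2 L^2(n+3)^3}{4}$ (a direct consequence of \eqnok{grad_smth_close}), the coefficient of $\|\nabla f(x_k)\|^2$ becomes $\frac{1}{2}[\gamma_k - 2L(n+4)\gamma_k^2]$, exactly matching the $P_R$-weight from \eqnok{prob_fun2}. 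Summing over $k$, using \eqnok{closef} to bound $f_\mu(x_1) - f_\mu(x_{N+1}) \le f(x_1) - f^* + \mu^2 L n$, dividing by $L\sum_k[\gamma_k - 2L(n+4)\gamma_k^2]$, and invoking the elementary estimates $(n+3)^3 \le (n+4)^3$ and $(n+6)^3 \le 4(n+4)^3$ (valid for $n \ge 0$) then gives \eqnok{zero_noncvx_smth_converg}. For part (b), I mirror the convex portion of Theorem~\ref{noncvx_smth_theorem} by expanding $\|x_{k+1}-x^*\|^2$, taking conditional expectation, lower-bounding $\langle\nabla f_\mu(x_k), x_k - x^*\rangle \ge f_\mu(x_k) - f_\mu(x^*)$ via convexity of $f_\mu$ (inherited from that of $f$ by \eqnok{rand_smooth_func}), and converting the $\|\nabla f(x_k)\|^2$ term arising from the second-moment bound into $2L[f(x_k) - f^*]$ using the smoothness-plus-convexity inequality for $f$. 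A final application of \eqnok{closef} replaces $f_\mu(x_k) - f_\mu(x^*)$ by $f(x_k) - f^*$ modulo an additive $\mu^2 L n$ per iteration, leaving the coefficient $2[\gamma_k - 2L(n+4)\gamma_k^2]$ in front of $f(x_k) - f^*$, and the same $(n+4)$-power estimates produce \eqnok{zero_cvx_smth_converg}.

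The main technical obstacle is bookkeeping: three dimension-dependent sources of error---the bias $\nabla f_\mu - \nabla f$ of size $\mu(n+3)^{3/2}$, the $(n+4)$-fold inflation of the variance of $G_\mu$, and its $(n+6)^3\mu^2L^2$ second-moment correction---enter the recursion at different orders in $\gamma_k$, and each must be absorbed into the single composite term $2\mu^2(n+4)[1 + L(n+4)^2\sum_k(\gamma_k/4 + L\gamma_k^2)]$ (for part a) or $2\mu^2 L(n+4)\sum_k[\gamma_k + L(n+4)^2\gamma_k^2]$ (for part b). Once the second-moment bound on $G_\mu$ is in hand and the conversions $\nabla f_\mu \leftrightarrow \nabla f$ and $f_\mu \leftrightarrow f$ are carried out carefully, the remaining steps follow the template of Theorem~\ref{noncvx_smth_theorem}.
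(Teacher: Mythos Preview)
Your proposal is correct and follows essentially the same approach as the paper's own proof: the paper too applies the descent lemma to $f_\mu$, derives the identical second-moment bound on $G_\mu$ by combining \eqnok{stoch_smth_approx_grad} (applied to $F(\cdot,\xi_k)$) with Assumption A1, converts between $\nabla f_\mu$ and $\nabla f$ via \eqnok{grad_smth_close} and between $f_\mu$ and $f$ via \eqnok{closef}/\eqnok{rand_smth_close}, and collapses the $(n+3)^3$, $(n+6)^3$ constants into powers of $(n+4)$ exactly as you describe. The only cosmetic difference in part (b) is that the paper cites \eqnok{basic_convex_rel} rather than the $2L$ form of the smoothness-plus-convexity inequality you invoke, but this leads to the same final coefficient $2[\gamma_k - 2L(n+4)\gamma_k^2]$ after the paper weakens its intermediate bound.
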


\vgap

\begin{proof}
Let $\zeta_k \equiv (\xi_k, u_k)$, $k \ge 1$, $\zeta_{[N]}:=(\zeta_1,...,\zeta_{N})$,
and $\bbe_{\zeta[N]}$ denote the expectation w.r.t. $\zeta_{[N]}$.
Also denote $\Delta_k \equiv G_{\mu}(x_k, \xi_k, u_k) - \nabla f_{\mu}(x_k) \equiv
 G_{\mu}(x_k,\zeta_k) - \nabla f_{\mu}(x_k),\ \  k \ge 1$.
Using the fact that $f \in {\cal C}_L^{1,1}(\bbr^n)$, Theorem~\ref{smth_approx}.a),
\eqnok{smooth} and \eqnok{update_SGF}, we have, for any $k = 1, \ldots, N$,
\beq \label{zero_noncvx_smth_cnvg1}
\begin{array}{lll}
f_{\mu}(x_{k+1}) &\le& f_{\mu}(x_k)-\gamma_k \, \langle \nabla f_{\mu}(x_k), G_{\mu}(x_k, \zeta_k) \rangle
+ \frac{L}{2} \gamma_k^2 \, \|G_{\mu}(x_k, \zeta_k)\|^2 \\
&=& f_{\mu}(x_k) -\gamma_k \, \|\nabla f_{\mu}(x_k)\|^2 - \gamma_k \, \langle \nabla f_{\mu}(x_k), \Delta_k \rangle
+ \frac{L}{2} \gamma_k^2 \, \|G_{\mu}(x_k, \zeta_k)\|^2.
\end{array}
\eeq
Summing up these inequalities, re-arranging the terms and noting that $f^*_{\mu} \le f_{\mu}(x_{N+1})$, we obtain
\beq \label{zero_noncvx_smth_cnvg2}
\sum_{k=1}^N \gamma_k \, \|\nabla f_{\mu}(x_k)\|^2
\le f_{\mu}(x_1) - f^*_{\mu} - \sum_{ k=1}^N \gamma_k \, \langle \nabla f_{\mu}(x_k), \Delta_k \rangle
+ \frac{L}{2}\sum_{k=1}^N \gamma_k^2 \, \|G_{\mu}(x_k, \zeta_k)\|^2.
\eeq
Now, observe that by \eqnok{def_ex_G_mu},
\beq \label{ass3_seq1}
\bbe[\langle \nabla f_{\mu}(x_k), \Delta_k \rangle |\zeta_{[k-1]}] = 0.
\eeq
and that by the assumption $F(\cdot, \xi_k) \in {\cal C}_L^{1,1}(\bbr^n)$,
\eqnok{stoch_smth_approx_grad} (with $f= F(\cdot, \xi_k)$), and \eqnok{grad_free_def},
\beq \label{exp_Gmu_bnd}
\begin{array}{lll}
\bbe[\|G_{\mu}(x_k, \zeta_k)\|^2|\zeta_{[k-1]}] &\le&
2(n+4)\bbe[\|G(x_k, \xi_k)\|^2|\zeta_{[k-1]}] + \frac{\mu^2}{2} L^2 (n+6)^3  \\
&\le& 2(n+4) \left[\bbe[\|\nabla f(x_k)\|^2|\zeta_{[k-1]}] + \sigma^2\right]
+ \frac{\mu^2}{2} L^2 (n+6)^3, 
\end{array}
\eeq
where the second inequality follows from Assumption A1.
Taking expectations with respect to $\zeta_{[N]}$
on both sides of \eqnok{zero_noncvx_smth_cnvg2}
and using the above two observations, we obtain
\[
\begin{array}{l}
\sum\limits_{k=1}^N \gamma_k \, \bbe_{\zeta[N]}\left[\|\nabla f_{\mu}(x_k)\|^2\right]
\le f_{\mu}(x_1) - f^*_{\mu} \\
\ \ \ + \, \frac{L}{2} \sum\limits_{k=1}^N \gamma_k^2 \, \left \{2(n+4) \left [\bbe_{\zeta[N]}
[\|\nabla f(x_k)\|^2] + \sigma^2\right]+ \frac{\mu^2}{2} L^2 (n+6)^3 \right \}.
\end{array}
\]
The above conclusion together with \eqnok{grad_smth_close2}
and \eqnok{closef} then imply that
\beq \label{zero_noncvx_smth_cnvg5}
\begin{array}{l}
\sum_{k=1}^N \gamma_k \left[\bbe_{\zeta[N]}[\| \nabla f(x_k)\|^2] - \frac{\mu^2}{2}L^2(n+3)^3 \right]
\le 2 \left[f(x_1) - f^* \right] + 2 \mu^2 L n \\
+ 2L (n+4) \sum_{k=1}^N \gamma_k^2 \, \bbe_{\zeta[N]}[\|\nabla f(x_k)\|^2]
+ \left[2L(n+4) \sigma^2 + \frac{\mu^2}{2} L^3(n+6)^3 \right] \sum_{k=1}^N \gamma_k^2.
\end{array}
\eeq
By re-arranging the terms and simplifying the constants, we have
\beq \label{zero_noncvx_smth_cnvg6}
\begin{array}{l}
\sum_{k=1}^N \left \{\left[\gamma_k - 2 L (n+4)\gamma_k^2 \right]
\bbe_{\zeta[N]}[\|\nabla f(x_k)\|^2] \right\}
\\
\le
2 \left[f(x_1) - f^* \right] + 2 L(n+4)\sigma^2 \sum_{k=1}^N \gamma_k^2 + 2 \mu^2 L n +
\frac{\mu^2}{2} L^2 \sum_{k=1}^N \left[(n+3)^3\gamma_k + L(n+6)^3\gamma_k^2 \right] \\
\le
2 \left[f(x_1) - f^* \right] + 2 L(n+4)\sigma^2 \sum_{k=1}^N \gamma_k^2 +
2 \mu^2 L (n+4) \left[1+ L(n+4)^2 \sum_{k=1}^N (\frac{\gamma_k}{4} +L \gamma_k^2)\right].
\end{array}
\eeq
Dividing both sides of the above inequality by
$\sum_{k=1}^N \left[\gamma_k - 2 L (n+4) \gamma_k^2 \right]$ and
noting that
\[
\bbe[\|\nabla f(x_R)\|^2] = \bbe_{R, \zeta[N]}[\|\nabla f(x_R)\|^2] =
\frac{\sum_{k=1}^N \left\{\left[\gamma_k - 2 L(n+4) \gamma_k^2\right] \bbe_{\zeta[N]}\|\nabla f(x_k)\|^2 \right\}}
{\sum_{k=1}^N \left[\gamma_k - 2 L(n+4) \gamma_k^2\right]},
\]
we obtain \eqnok{zero_noncvx_smth_converg}.

We now show part b). Denote $\w_k \equiv \|x_k - x^*\|$.
First observe that, for any $k = 1, \ldots, N$,
\beqas
\w_{k+1}^2 &=& \|x_k - \gamma_k G_{\mu}(x_k, \zeta_k) - x^*\|^2 \\
&=& \w_k^2 - 2 \gamma_k \langle \nabla f_{\mu}(x_k) + \Delta_k, x_k - x^*\rangle
+ \gamma_k^2 \|G_{\mu}(x_k, \zeta_k)\|^2.
\eeqas
and hence that
\[
\w_{N+1}^2 =
\w_1^2 - 2 \sum_{k=1}^N \gamma_k \, \langle \nabla f_{\mu}(x_k), x_k - x^*\rangle
 - 2 \sum_{k=1}^N \gamma_k \, \langle \Delta_k, x_k - x^*\rangle +   \sum_{k=1}^N \gamma_k^2 \,
 \|G_{\mu}(x_k, \zeta_k)\|^2.
\]
Taking expectation w.r.t. $\zeta_{\zeta[N]}$ on both sides of the above equality,
using relation \eqnok{exp_Gmu_bnd} and noting that by \eqnok{def_ex_G_mu},
$
\bbe[\langle \Delta_k, x_k - x^*\rangle| \zeta_{[k-1]}] = 0,
$
we obtain
\beqa
\bbe_{\zeta[N]}[\w_{N+1}^2]
 &\le& \w_1^2 - 2 \sum_{k=1}^N \gamma_k \, \bbe_{\zeta[N]}
 [\langle \nabla f_{\mu}(x_k), x_k - x^*\rangle]  + 2(n+4) \sum_{k=1}^N \gamma_k^2 \,\bbe_{\zeta[N]}[\|\nabla f(x_k)\|^2]\nn \\
&& + \, \left[2(n+4)\sigma^2 + \frac{\mu^2}{2} L^2(n+6)^3\right] \sum_{k=1}^N \gamma_k^2 \nn \\
&\le& \w_1^2 - 2 \sum_{k=1}^N \gamma_k \, \bbe_{\zeta[N]}[f_{\mu}(x_k) - f_{\mu}(x^*)] + 2(n+4)L \sum_{k=1}^N \gamma_k^2 \, \bbe_{\zeta[N]}[f(x_k) - f^*]\nn \\
&& + \, \left[2(n+4)\sigma^2 + \frac{\mu^2}{2} L^2(n+6)^3 \right] \sum_{k=1}^N \gamma_k^2 \nn \\
&\le& \w_1^2 - 2 \sum_{k=1}^N \gamma_k \, \bbe_{\zeta[N]}\left[f(x_k) - f^* - \mu^2 L n\right] +
2(n+4)L \sum_{k=1}^N \gamma_k^2 \, \bbe_{\zeta[N]}[f(x_k) - f^*]\nn \\
&& + \, \left[2(n+4)\sigma^2 + \frac{\mu^2}{2} L^2(n+6)^3 \right] \sum_{k=1}^N \gamma_k^2,\nn
\eeqa
where the second inequality follows from \eqnok{basic_convex_rel} and
the convexity of $f_\mu$, and the last inequality follows from \eqnok{rand_smth_close}.
Re-arranging the terms in the above inequality, using the facts that $\w_{N+1}^2 \ge 0$ and
$f(x_k) \ge f^*$, and simplifying the constants, we have
\beqas
\lefteqn{2 \sum_{k=1}^N \left[\gamma_k  - 2(n+4)L\gamma_k^2)\right]
\bbe_{\zeta[N]}[f(x_k) - f^*] }\\
&\le& 2 \sum_{k=1}^N \left[\gamma_k  - (n+4)L\gamma_k^2)\right]
\bbe_{\zeta[N]}[f(x_k) - f^*]\\
&\le& \w_1^2 + 2 \mu^2 L (n+4) \sum_{k=1}^N \gamma_k  +2(n+4)
\left[L^2 \mu^2 (n+4)^2 + \sigma^2\right]\sum_{k=1}^N \gamma_k^2.
\eeqas
The rest of proof is similar to part a) and hence the details are skipped.
\end{proof}

\vgap

Similarly to the RSG method, we can specialize the convergence results in
Theorem~\ref{zero-noncvx_smth_theorem} for the RSGF method with
a constant stepsize policy.

\begin{corollary}
Suppose that the stepsizes $\{\gamma_k\}$ are set to
\beq \label{zero_constant_step}
\gamma_k = \frac{1}{\sqrt{n+4}} \min \left\{ \frac{1}{4 L \sqrt{n+4}}, \frac{\tilde D}{\sigma \sqrt N}\right\}, \ \ k = 1, \ldots, N,
\eeq
for some $\tilde D >0$. Also assume that the probability mass function $P_R(\cdot)$
is set to \eqnok{prob_fun2} and $\mu$ is chosen such that
\beq \label{def_mu}
\mu \le \frac{D_f}{(n+4) \sqrt{2N}}
\eeq
where $D_f$ and $D_X$ are defined in \eqnok{def_Df} and \eqnok{def_Dx}, respectively.
Then, under Assumptions A1 and A3, we have
\beq \label{zero_nocvx_smooth}
\frac{1}{L} \bbe[\|\nabla f(x_R)\|^2] \le \bar {\cB}_N
:=\frac{12(n+4)L D_f^2}{N} + \frac{ 4 \sigma \sqrt {n+4} }{\sqrt{N}}\left(\tilde D + \frac{D_f^2}{\tilde D}\right) .
\eeq
If, in addition, problem \eqnok{NLP1} is convex with an optimal solution $x^*$
and $\mu$ is chosen such that
\[
\mu \le \frac{D_X}{\sqrt{(n+4)}},
\] then,
\beq \label{zero_cvx_smooth}
\bbe[f(x_R) - f^*] \le  \frac{5 L (n+4)D_X^2}{N} + \frac{2 \sigma \sqrt{n+4} }{\sqrt{N}}\left(\tilde D + \frac{D_X^2}{\tilde D}\right).
\eeq

\end{corollary}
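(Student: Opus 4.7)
The plan is to substitute the constant stepsize $\gamma_k \equiv \gamma$ into the bounds of Theorem~\ref{zero-noncvx_smth_theorem} and then simplify using the explicit choices of $\gamma$ and $\mu$ specified in the corollary.

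First I would verify the stepsize restriction $\gamma_k < 1/[2L(n+4)]$ required by Theorem~\ref{zero-noncvx_smth_theorem}. From \eqnok{zero_constant_step} we have
\[
\gamma \le \frac{1}{\sqrt{n+4}}\cdot \frac{1}{4L\sqrt{n+4}} = \frac{1}{4L(n+4)},
\]
so $2L(n+4)\gamma \le 1/2$ and therefore $\sum_{k=1}^N[\gamma_k - 2L(n+4)\gamma_k^2] = N\gamma(1 - 2L(n+4)\gamma) \ge N\gamma/2$.

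For part (a), plugging $\gamma_k = \gamma$ into \eqnok{zero_noncvx_smth_converg} and using the above lower bound on the denominator gives
\[
\frac{1}{L}\bbe[\|\nabla f(x_R)\|^2] \le \frac{2D_f^2}{N\gamma} + \frac{4\mu^2(n+4)}{N\gamma} + 4\mu^2 L(n+4)^3\Big(\tfrac{1}{4}+L\gamma\Big) + 4(n+4)\sigma^2\gamma.
\]
The next step is to absorb the $\mu$-dependent terms. Since $\mu^2 \le D_f^2/[2(n+4)^2 N]$ by \eqnok{def_mu}, and $L\gamma \le 1/[4(n+4)] \le 1/4$, the two middle terms contribute at most ${\cal O}(LD_f^2(n+4)/N)$ together with a lower-order ${\cal O}(LD_f^2/N^2)$ piece. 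Finally, the explicit form of \eqnok{zero_constant_step} yields
\[
\frac{1}{\gamma} \le \sqrt{n+4}\max\!\left\{4L\sqrt{n+4},\,\frac{\sigma\sqrt{N}}{\tilde D}\right\} \le 4L(n+4) + \frac{\sigma\sqrt{(n+4)N}}{\tilde D},\qquad \gamma \le \frac{\tilde D}{\sigma\sqrt{(n+4)N}}.
\]
Plugging these into the leading $2D_f^2/(N\gamma)$ term and the variance term $4(n+4)\sigma^2\gamma$, and combining all contributions, produces \eqnok{zero_nocvx_smooth}; the coefficient $12$ is obtained by rolling the smoothing-error and lower-order contributions into the $L(n+4)D_f^2/N$ bucket.

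Part (b) proceeds along identical lines starting from Theorem~\ref{zero-noncvx_smth_theorem}(b): after substituting $\gamma_k=\gamma$, one uses $\mu \le D_X/\sqrt{n+4}$ to show that the terms of the form $\mu^2 L(n+4)^k\gamma$ are dominated by a constant multiple of $LD_X^2/N$, and then the same two-sided estimates on $\gamma$ and $1/\gamma$ yield \eqnok{zero_cvx_smooth}. I expect the main obstacle to be purely bookkeeping: one must carefully track the numerous constants coming from the smoothing-error and variance terms to confirm they combine into the stated coefficients $5$ and $2$. There is no new conceptual difficulty beyond the substitution and simplification already illustrated in the proof of Corollary~\ref{noncvx_smth_cor}.
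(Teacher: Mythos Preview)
Your proof of part (a) is correct and follows essentially the same route as the paper: substitute the constant stepsize, use $1 - 2L(n+4)\gamma \ge 1/2$ to bound the denominator below by $N\gamma/2$, then insert the explicit bounds $1/\gamma \le 4L(n+4) + \sigma\sqrt{(n+4)N}/\tilde D$, $\gamma \le \tilde D/(\sigma\sqrt{(n+4)N})$, and $\mu^2 \le D_f^2/[2(n+4)^2 N]$ to collect everything into the two buckets of \eqnok{zero_nocvx_smooth}. The paper in fact only writes out part (a) and declares part (b) ``similar.''

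Your sketch for part (b), however, has a genuine gap. After dividing the numerator of \eqnok{zero_cvx_smth_converg} by the denominator $\sim N\gamma$, the leading smoothing term is
\[
\frac{2\mu^2 L(n+4)\,N\gamma}{N\gamma} \;=\; 2\mu^2 L(n+4),
\]
which carries \emph{no} residual factor of $\gamma$. With only the stated hypothesis $\mu \le D_X/\sqrt{n+4}$ this is bounded by $2LD_X^2$, a constant independent of $N$, so it cannot be absorbed into an $L(n+4)D_X^2/N$ bucket as you claim. To make \eqnok{zero_cvx_smooth} come out one actually needs a condition of the form $\mu \lesssim D_X/[(n+4)\sqrt{N}]$, analogous to \eqnok{def_mu}; the printed hypothesis $\mu \le D_X/\sqrt{n+4}$ appears to be a misprint, and your bookkeeping plan should not have glossed over it.
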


\vgap

\begin{proof}
We prove \eqnok{zero_nocvx_smooth} only since relation \eqnok{zero_cvx_smooth} can be shown by using similar arguments.
First note that by \eqnok{zero_constant_step}, we have
\beqa
\gamma_k &\le& \frac{1}{4(n+4)L}, \ \ k = 1, \ldots, N, \\
\sum_{k=1}^N \left[\gamma_k - 2 L(n+4) \gamma_k^2 \right] &=& N \gamma_1 \left[1 - 2 L(n+4) \gamma_1 \right] \ge \frac {N \gamma_1}{2}.
\eeqa
Therefore, using the above inequalities and \eqnok{zero_noncvx_smth_converg}, we obtain
\beqas
\frac{1}{L} \bbe[\|\nabla f(x_R)\|^2] &\le& \frac{2 D_f^2 + 4 \mu^2(n+4)}{N \gamma_1} + \mu^2 L(n+4)^3 + 4(n+4) \left[ \mu^2 L^2(n+4)^2 + \sigma^2 \right] \gamma_1  \\
&\le& \frac{2 D_f^2+ 4 \mu^2(n+4)}{N}\max\left\{4 L(n+4), \frac{\sigma \sqrt{(n+4)N}}{\tilde D} \right\}  \\
&& + \, \mu^2 L (n+4)^2\left[(n+4) + 1 \right] + \frac {4 \sqrt {n+4}\tilde D \sigma} {\sqrt N},
\eeqas
which, in view of \eqnok{def_mu}, then implies that
\beqas
\frac{1}{L} \bbe[\|\nabla f(x_R)\|^2]&\le& \frac{2 D_f^2}{N} \left[ 1 + \frac{1}{(n+4) N}\right] \left[4 L(n+4) + \frac{\sigma \sqrt{(n+4)N}}{\tilde D} \right]\\
&& + \, \frac{L D_f^2}{2N} \left[(n+4) + 1\right] + \frac {4 \sqrt {n+4}\tilde D \sigma} {\sqrt N}\\
&=& \frac{LD_f^2}{N}\left[\frac{17(n+4)}{2} + \frac{8}{N} + \frac{1}{2} \right]
+ \frac{2 \sigma \sqrt {n+4} }{\sqrt N}
\left[ \frac{D_f^2}{\tilde D}\left(1+\frac{1}{(n+4)N} \right) + 2 \tilde D \right ]  \\
&\le& \frac{12 L(n+4) D_f^2}{N} + \frac{4 \sigma \sqrt {n+4}}{\sqrt{N}} \left(\tilde D + \frac{D_f^2}{\tilde D} \right).
\eeqas
\end{proof}

\vgap

A few remarks about the results obtained in Corollary~\ref{noncvx_smth_cor} are in order.
Firstly, similar to the RSG method, we use the same selection of stepsizes
$\{\gamma_k\}$ and probability mass function $P_R(\cdot)$ in RSGF method for both convex and
nonconvex SP problems. In particular, in view of \eqnok{zero_nocvx_smooth}, the iteration complexity of
the RSGF method for finding an $\epsilon$-solution of problem \eqnok{NLP1} can be bounded by
${\cal O}(n/\epsilon^2)$.
Moreover, in view of \eqnok{zero_cvx_smooth}, if the problem is convex,
a solution $\bar x$ satisfying $\bbe[f(\bar x) - f^*] \le \epsilon$ can
also be found in ${\cal O}(n/\epsilon^2)$ iterations.
This result has a weaker dependence (by a factor of $n$) than the one established
by Nesterov for solving general nonsmooth convex SP problems (see page 17 of \cite{Nest11-1}).
This improvement is obtained since we are dealing with a more special class of
SP problems.
Also, note that in the case of $\sigma=0$, the iteration complexity of the RSGF method reduces to ${\cal O}(n/\epsilon)$ 
which is is similar to the one obtained by Nesterov \cite{Nest11-1} for the derivative free random search 
method when applied to both smooth convex and nonconvex deterministic problems.

Secondly, we need to specify $\tilde D$ for the stepsize policy in \eqnok{zero_constant_step}.
According to \eqnok{zero_nocvx_smooth} and \eqnok{zero_cvx_smooth}, an optimal selection of $\tilde D$
would be $ D_f$ and $D_X$, respectively, for the nonconvex and convex case.
With such selections, the bounds in \eqnok{zero_nocvx_smooth}
and \eqnok{zero_cvx_smooth}, respectively, reduce to
\beqa
 \frac{1}{L} \bbe[\|\nabla f(x_R)\|^2] &\le& \frac{12(n+4)L D_f^2}{N} + \frac{ 8 \sqrt{n+4} D_f \sigma}{\sqrt N}, \label{zero_nocvx_smooth1} \\
\bbe[f(x_R) - f^*] &\le&  \frac{5 L (n+4)D_X^2}{N} + \frac{4 \sqrt{n+4} D_X \sigma}{\sqrt N}. \label{zero_cvx_smooth1}
\eeqa

\vgap

Similarly to the RSG method, we can establish the complexity of the
RSGF method for finding an $(\epsilon, \Lambda)$-solution of problem \eqnok{NLP1}
for some $\epsilon >0$ and $\Lambda \in (0,1)$.
More specifically, by using \eqnok{zero_nocvx_smooth} and Markov's inequality, we have
\beq \label{zero_nocvx_smooth_prob}
\prob\left\{
\|\nabla f(x_R)\|^2 \ge  \lambda L \bar{\cB}_N \right\}
\le \frac{1}{\lambda}, \ \ \forall \lambda > 0,
\eeq
which implies that the total  number of calls to the $\SZO$
performed by the RSGF method for finding an $(\epsilon, \Lambda)$-solution
of \eqnok{NLP1} can be bounded by
\beq \label{zero_nocvx_smooth_prob1}
{\cal O} \left\{
\frac{n L^2 D_f^2}{\Lambda \epsilon} + \frac{n L^2}{\Lambda^2} \left(\tilde D + \frac{D_f^2}{\tilde D} \right)^2
\frac{\sigma^2}{\epsilon^2} \right\}.
\eeq
We will investigate a possible approach to improve the above complexity bound in next subsection.

\vgap

\subsection{A two-phase randomized stochastic gradient free method} \label{sec_2RSGF}
In this section, we modify the 2-RSG method to improve the complexity bound in
\eqnok{zero_nocvx_smooth_prob1} for finding an $(\epsilon, \Lambda)$-solution of problem \eqnok{NLP1}.

\vgap

\noindent {\bf A two-phase RSGF ($2$-RSGF) method}
\begin{itemize}
\item [] {\bf Input:}
Initial point $x_1$, number of runs $S$, iteration limit $N$, and sample size $T$.
\item [] {\bf Optimization phase:}
\item [] \hspace{0.1in} For $s = 1, \ldots, S$
	\begin{itemize}
	\item [] Call the RSGF method with input $x_1$,
iteration limit $N$, stepsizes $\{\gamma_k\}$ in \eqnok{zero_constant_step},
probability mass function $P_{R}$ in \eqnok{prob_fun2}, and the smoothing parameter
$\mu$ satisfying \eqnok{def_mu}. Let $\bar x_s$ be
the output of this procedure.
\end{itemize}
\item [] {\bf Post-optimization phase:}
\item[] \hspace{0.1in} Choose a solution $\bar x^*$ from the candidate
list $\{\bar x_1, \ldots, \bar x_S\}$ such that
\beq \label{zero_post_opt}
\|g_{\mu}(\bar x^*)\| = \min_{s = 1, \ldots, S} \|g_{\mu}(\bar x_s)\|, \ \
g_{\mu}(\bar x_s) := \frac{1}{T} \sum_{k=1}^T G_{\mu}(\bar x_s, \xi_k, u_k),
\eeq
where $G_\mu(x, \xi, u)$ is defined in \eqnok{grad_free_def}.
\end{itemize}

\vgap

The main convergence properties of the
$2$-RSGF method are summarized in Theorem~~\ref{2RSGF_theorem}. More specifically, Theorem~\ref{2RSGF_theorem}.a)
establishes the rate of convergence of the $2$-RSGF method with
a given set of parameters $(S,N,T)$, while
Theorem~\ref{2RSGF_theorem}.b) shows the complexity of this method
for finding an $(\epsilon,\Lambda)$-solution
of problem \eqnok{NLP1}.

\begin{theorem} \label{2RSGF_theorem}
Under Assumptions A1 and A3, the following statements hold for the $2$-RSGF method
applied to problem \eqnok{NLP1}.
\begin{itemize}
\item [a)] Let $\bar{\cB}_N$ be defined in \eqnok{zero_nocvx_smooth}. We have
\beq \label{2RSGF_conv1}
\begin{array}{l}
\prob\left\{
\|\nabla f(\bar x^*)\|^2  \ge 8 L \bar{\cB}_N + \frac{3(n+4) L^2 D_f^2}{2 N}
 \,\,+ \frac{24(n+4)\lambda}{T}
\left[L \bar{\cB}_N + \frac{(n+4) L^2 D_f^2}{N} + \sigma^2\right]
\right\} \\
\le \frac{S+1}{\lambda} + 2^{-S}, \ \
\forall \, \lambda > 0;
\end{array}
\eeq
\item [b)] Let $\epsilon > 0$ and $\Lambda \in (0,1)$ be given.
If $S$ is set to $S(\Lambda)$ as in \eqnok{def_S}, and
 the iteration limit $N$ and sample size $T$, respectively, are set to
\beqa
N &=& \hat N(\epsilon) := \max \left\{ \frac{12(n+4) (6 L D_f)^2}{\epsilon},
\left[72 L \sqrt{n+4} \left(\tilde D + \frac{D_f^2}{\tilde D}\right)
\frac{\sigma}{\epsilon} \right]^2 \right\}, \label{zero_def_N} \\
T &=& \hat {T}(\epsilon,\Lambda):= \frac{24 (n+4) (S+1)}{\Lambda} \max \left\{1 ,  \frac{6 \sigma^2}{ \epsilon} \right\},
\label{def_That}
\eeqa
then the $2$-RSGF method can compute an $(\epsilon, \Lambda)$-solution of problem \eqnok{NLP1}
after taking at most
\beq \label{zero_bnd_compl}
2 S(\Lambda) \left[ \hat N(\epsilon) + \hat T(\epsilon, \Lambda)\right]
\eeq
calls to the $\SZO$.
\end{itemize}
\end{theorem}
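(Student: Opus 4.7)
The plan is to parallel the proof of Theorem~\ref{2RSG_theorem}, modifying each step to account for two new features of the zeroth-order setting: the bias of the Gaussian smoothing (so $\nabla f_\mu \ne \nabla f$) and the fact that the variance of $G_\mu(x,\xi,u)$ around $\nabla f_\mu(x)$ depends on $\|\nabla f(x)\|^2$ through \eqnok{exp_Gmu_bnd}.

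For part (a), I would first derive an analogue of \eqnok{rel_post}. Applying the triangle inequality through the chain $\nabla f(\bar x^*) \to \nabla f_\mu(\bar x^*) \to g_\mu(\bar x^*)$, using $\|g_\mu(\bar x^*)\|=\min_s\|g_\mu(\bar x_s)\|$ and then unfolding $g_\mu(\bar x_s)\to \nabla f_\mu(\bar x_s)\to \nabla f(\bar x_s)$, I get an inequality of the form
\[
\|\nabla f(\bar x^*)\|^2 \le c_1 \min_{s}\|\nabla f(\bar x_s)\|^2 + c_2 \max_s\|g_\mu(\bar x_s)-\nabla f_\mu(\bar x_s)\|^2 + c_3 \|g_\mu(\bar x^*)-\nabla f_\mu(\bar x^*)\|^2 + c_4 \mu^2 L^2(n+3)^3,
\]
where the last term is the deterministic Gaussian-smoothing bias from \eqnok{grad_smth_close}, and the choice of $\mu$ in \eqnok{def_mu} converts it into a $(n+4)L^2D_f^2/N$ term. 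This replaces the role of \eqnok{rel_post} and is the first technical step I would execute.

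Next, for the three random terms I would control each by the right tool. For $\min_s\|\nabla f(\bar x_s)\|^2$: apply Theorem~\ref{zero-noncvx_smth_theorem}(a) and Markov's inequality (with $\lambda=2$) to get $\|\nabla f(\bar x_s)\|^2\le 2L\bar\cB_N$ with probability at least $1/2$ for each $s$; independence across the $S$ runs then yields the $2^{-S}$ term exactly as in \eqnok{opt_phase_result}. For the two concentration terms, conditional on $\bar x_s$ the summands $G_\mu(\bar x_s,\xi_k,u_k)-\nabla f_\mu(\bar x_s)$ form a zero-mean i.i.d.\ sequence by \eqnok{def_ex_G_mu}, with variance bounded via \eqnok{exp_Gmu_bnd} by $2(n+4)[\|\nabla f(\bar x_s)\|^2+\sigma^2]+\tfrac{\mu^2}{2}L^2(n+6)^3$. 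Applying Lemma~\ref{marting}(a) conditionally on $\bar x_s$ and then taking a union bound over $s=1,\ldots,S$ produces tails of the form $S/\lambda$ and $1/\lambda$, mirroring \eqnok{closeness1}--\eqnok{closeness2}. The subtle point is that the conditional variance depends on $\|\nabla f(\bar x_s)\|^2$, so after the union bound I combine with the $2^{-S}$ event above to replace $\|\nabla f(\bar x_s)\|^2$ by $2L\bar\cB_N$ on the good event; the $\mu^2 L^2(n+6)^3$ contribution is again folded into the $(n+4)L^2D_f^2/N$ term via \eqnok{def_mu}. Chasing constants gives exactly \eqnok{2RSGF_conv1}.

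For part (b), once (a) is in hand the argument is a routine parameter calibration analogous to the corresponding step in Theorem~\ref{2RSG_theorem}(b): verify that the choice $N=\hat N(\epsilon)$ in \eqnok{zero_def_N} makes $L\bar\cB_N\le \epsilon/$(a small constant) by combining \eqnok{zero_nocvx_smooth} with the two terms in the max; verify that $(n+4)L^2D_f^2/N \le \epsilon/$const as well; then choose $\lambda=2(S+1)/\Lambda$ and use $T=\hat T(\epsilon,\Lambda)$ in \eqnok{def_That} to show that the $\tfrac{24(n+4)\lambda}{T}[L\bar\cB_N+(n+4)L^2D_f^2/N+\sigma^2]$ contribution is also bounded by $\epsilon/$const. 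Summing yields $\|\nabla f(\bar x^*)\|^2<\epsilon$ with failure probability $\le (S+1)/\lambda+2^{-S}\le \Lambda/2+\Lambda/2=\Lambda$ by \eqnok{def_S}. Finally, the oracle count is $2[S\cdot N+S\cdot T]$, where the factor $2$ comes from \eqnok{grad_free_def} requiring two $\SZO$ calls per iteration, giving \eqnok{zero_bnd_compl}.

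The main obstacle is the first step: properly tracking the combinatorics of the triangle inequalities so that the coefficients $c_1,c_2,c_3,c_4$ come out matching \eqnok{2RSGF_conv1}, and simultaneously exploiting the conditional structure to let Lemma~\ref{marting}(a) deliver the data-dependent variance bound \eqnok{exp_Gmu_bnd}. Once this bookkeeping is done, parts (b) is essentially mechanical plugging.
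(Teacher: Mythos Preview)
Your overall architecture is right and matches the paper's: derive an analogue of \eqnok{rel_post} through the chain $\nabla f \to \nabla f_\mu \to g_\mu$, handle the smoothing bias via \eqnok{grad_smth_close} and \eqnok{def_mu}, bound $\min_s\|\nabla f(\bar x_s)\|^2$ by independence and Markov exactly as in \eqnok{zero_opt_phase_result}, and then do parameter calibration for part (b). That part is fine.

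The gap is in how you control the concentration terms. You propose to apply Lemma~\ref{marting}(a) \emph{conditionally} on $\bar x_s$, getting a threshold that depends on the random quantity $\|\nabla f(\bar x_s)\|^2$, and then to ``combine with the $2^{-S}$ event above to replace $\|\nabla f(\bar x_s)\|^2$ by $2L\bar\cB_N$ on the good event.'' This does not work: the $2^{-S}$ event is the complement of $\{\min_s\|\nabla f(\bar x_s)\|^2 \ge 2L\bar\cB_N\}$, so on its complement you only know that the \emph{minimum} over $s$ is small. You need a per-$s$ bound on $\|\nabla f(\bar x_s)\|^2$ to control $\max_s\|g_\mu(\bar x_s)-\nabla f_\mu(\bar x_s)\|^2$ (and the analogous term at $\bar x^*$), and the $2^{-S}$ event does not provide that. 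Trying to upgrade to the event $\{\|\nabla f(\bar x_s)\|^2\le 2L\bar\cB_N\ \forall s\}$ via a union bound costs probability $S/2$, which destroys the argument.

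The paper avoids this entirely by \emph{not} conditioning. It bounds the \emph{unconditional} second moment: since each $\bar x_s$ is the output of an RSGF run, \eqnok{zero_nocvx_smooth} gives $\bbe[\|\nabla f(\bar x_s)\|^2]\le L\bar\cB_N$ in expectation, and hence by \eqnok{exp_Gmu_bnd} and \eqnok{def_mu},
\[
\bbe[\|\Delta_{s,k}\|^2] \le \bbe[\|G_\mu(\bar x_s,\xi_k,u_k)\|^2]
\le 2(n+4)\Big[L\bar\cB_N+\sigma^2+\tfrac{L^2 D_f^2}{2N}\Big] =: \cD_N,
\]
a \emph{deterministic} bound valid for every $s$ and $k$. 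Lemma~\ref{marting}(a) then applies with $\sigma_k^2=\cD_N$ directly, yielding \eqnok{zero_closeness1}--\eqnok{zero_closeness2} with no need to intersect with any good event from the optimization phase. This is precisely the missing idea in your proposal: the in-expectation bound on $\|\nabla f(\bar x_s)\|^2$ from the optimization phase already feeds into the variance bound for the post-optimization estimator, so you never have to control $\|\nabla f(\bar x_s)\|^2$ pathwise. Once you make this substitution, your outline goes through and part (b) is, as you say, mechanical.
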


\vgap

\begin{proof}
First, observe that by \eqnok{grad_smth_close}, \eqnok{def_mu} and \eqnok{zero_nocvx_smooth}, we have
\beq \label{CN_mu}
\|\nabla f_{\mu}(x) - \nabla f(x)\|^2 \le \frac{\mu^2}{4} L^2(n+3)^3
\le \frac{(n+4)L^2 D_f^2}{8N}.
\eeq
Using this observation and the definition of $\bar x^*$ in \eqnok{zero_post_opt}, we obtain
\beqas
\|g_{\mu}(\bar x^*)\|^2 &=& \min_{s=1,\ldots,S} \|g_{\mu}(\bar x_s)\|^2
= \min_{s=1,\ldots,S} \|\nabla f(\bar x_s) + g_{\mu}(\bar x_s) - \nabla f(\bar x_s)\|^2  \\
&\le& \min_{s=1, \ldots,S} \left\{2 \left[\|\nabla f(\bar x_s)\|^2 +
\|g_{\mu}(\bar x_s) - \nabla f(\bar x_s) \|^2 \right]\right\} \\
&\le& \min_{s=1, \ldots,S} \left\{2 \left[\|\nabla f(\bar x_s)\|^2 +
2\|g_{\mu}(\bar x_s) - \nabla f_{\mu}(\bar x_s) \|^2+2\|\nabla f_{\mu}(\bar x_s) - \nabla f(\bar x_s) \|^2 \right]\right\}\\
&\le& 2 \min_{s=1, \ldots,S} \|\nabla f(\bar x_s)\|^2
+ 4 \max_{s=1,\ldots,S} \|g_{\mu}(\bar x_s) - \nabla f(\bar x_s) \|^2
+ \frac{(n+4)L^2 D_f^2}{2N},
\eeqas
which implies that
\beqa
\|\nabla f(\bar x^*)\|^2
&\le& 2\|g_{\mu}(\bar x^*)\|^2 + 2 \|\nabla f(\bar x^*) - g_{\mu}(\bar x^*)\|^2 \nn\\
&\le& 2 \|g_{\mu}(\bar x^*)\|^2 + 4 \|\nabla f_{\mu}(\bar x^*) - g_{\mu}(\bar x^*)\|^2 +4
\|\nabla f(\bar x^*) - \nabla f_{\mu}(\bar x^*)\|^2 \nn \\
&\le& 4 \min_{s=1, \ldots,S} \|\nabla f(\bar x_s)\|^2
+ 8 \max_{s=1,\ldots,S} \|g_{\mu}(\bar x_s) - \nabla f(\bar x_s) \|^2
+ \frac{(n+4)L^2 D_f^2}{N} \nn\\
&& \, +  \,\, 4 \|\nabla f_{\mu}(\bar x^*) - g_{\mu}(\bar x^*)\|^2 + 4
\|\nabla f(\bar x^*) - \nabla f_{\mu}(\bar x^*)\|^2\nn \\
&\le& 4 \min_{s=1, \ldots,S} \|\nabla f(\bar x_s)\|^2
+ 8 \max_{s=1,\ldots,S} \|g_{\mu}(\bar x_s) - \nabla f(\bar x_s) \|^2 \nn\\
&& \, + \,\, 4 \|\nabla f_{\mu}(\bar x^*) - g_{\mu}(\bar x^*)\|^2 + \frac{3(n+4)L^2 D_f^2}{2N},
\label{zero_rel_post}
\eeqa
where the last inequality also follows from \eqnok{CN_mu}.
We now provide certain probabilistic bounds on the individual terms in the right hand side of
the above inequality.
Using \eqnok{zero_nocvx_smooth_prob} (with $\lambda = 2$), we obtain
\beq \label{zero_opt_phase_result}
\prob \left\{
\min_{s= 1, \ldots, S} \|\nabla f(\bar x_s)\|^2 \ge
2 L \bar{\cB}_N\right\} = \prod_{s=1}^S \prob\left\{\|\nabla f(\bar x_s)\|^2 \ge
2 L \bar{\cB}_N \right\} \le 2^{-S}. 
\eeq
Moreover, denote $\Delta_{s,k} = G_{\mu}(\bar x_s, \xi_k, u_k) - \nabla f_{\mu}(\bar x_s)$, $k = 1, \ldots, T$.
Note that, similar to \eqnok{exp_Gmu_bnd}, we have
\beqas
\bbe [\|G_{\mu}(\bar x_s, \xi_k, u_k)\|^2]
&\le& 2(n+4) [\bbe [\|G(\bar x_s, \xi)\|^2] + \frac{\mu^2}{2} L^2 (n+6)^3  \\
&\le& 2(n+4) [\bbe [\|\nabla f(\bar x_s)\|^2] +\sigma^2]+2 \mu^2 L^2 (n+4)^3.
\eeqas
It then follows from the previous inequality, \eqnok{def_mu} and \eqnok{zero_nocvx_smooth}
that
\beqa
\bbe [\|\Delta_{s,k}\|^2] &=& \bbe [\|G_{\mu}(\bar x_s, \xi_k, u_k) - \nabla f_{\mu}(\bar x_s)\|^2]
\le \bbe [\|G_{\mu}(\bar x_s, \xi_k, u_k)\|^2] \nn\\
&\le&
 2(n+4) \left[L \bar{\cB}_N + \sigma^2\right] + 2 \mu^2 L^2 (n+4)^3\nn \\
&\le& 2(n+4) \left[L \bar{\cB}_N + \sigma^2+\frac{L^2D_f^2}{2N}\right]=:\cD_N. \label{def_DN}
\eeqa
Noting that $g_{\mu}(\bar x_s) - \nabla f_{\mu}(\bar x_s) = \sum_{k=1}^T \Delta_{s,k} / T$, we conclude
from \eqnok{def_DN}, Assumption A1 and Lemma~\ref{marting}.a) that, for any $s = 1, \ldots, S$,
\[
\prob\left\{ \|g_{\mu}(\bar x_s) - \nabla f_{\mu}(\bar x_s)\|^2 \ge  \frac{\lambda \cD_N}{T} \right\}
= \prob\left\{ \|\sum_{k=1}^T \Delta_{s,k} \|^2 \ge  \lambda T \cD_N \right\}
\le \frac{1}{\lambda}, \ \forall \lambda > 0,
\]
which implies that
\beq \label{zero_closeness1}
\prob\left\{
\max_{s=1,\ldots,S} \|g_{\mu}(\bar x_s) - \nabla f_{\mu}(\bar x_s)\|^2 \ge \frac{\lambda \cD_N}{T}
 \right\} \le \frac{S}{\lambda},  \ \forall \lambda > 0.
\eeq
and that
\beq \label{zero_closeness2}
\prob\left\{
\|g_{\mu}(\bar x^*) - \nabla f_{\mu}(\bar x^*)\|^2  \ge  \frac{\lambda \cD_N}{T}
\right\} \le \frac{1}{\lambda}, \ \forall \lambda > 0.
\eeq
The result then follows by combining relations \eqnok{zero_rel_post}, \eqnok{zero_opt_phase_result},\eqnok{def_DN},
\eqnok{zero_closeness1} and \eqnok{zero_closeness2}.

We now show part b) holds. Clearly, the total number of calls to $\cal SZO$ in the $2$-RSGF method
is bounded by $2 S[\hat N(\epsilon)+\hat T(\epsilon)]$. It then suffices to
show that $\bar x^*$ is an $(\epsilon, \Lambda)$-solution
of problem \eqnok{NLP1}.
Noting that by the definitions of $\bar{\cB}(N)$ and $\hat N(\epsilon)$, respectively, in \eqnok{zero_nocvx_smooth}
and \eqnok{zero_def_N}, we have
\[
\bar{\cB}_{\hat N(\epsilon)} = \frac{12(n+4)L D_f^2}{\hat N(\epsilon)} +
\frac{4 \sigma \sqrt {n+4}}{\sqrt{\hat N(\epsilon)}} \left(\tilde D + \frac{D_f^2}{\tilde D}\right)
\le \frac{\epsilon}{36L} + \frac{\epsilon}{18L} = \frac{\epsilon}{12L}.
\]
Hence, we have
\[
8L \bar B_{\hat N(\epsilon)} + \frac{3(n+4) L^2 D_f^2}{2 \hat N(\epsilon)} \le
\frac{2\epsilon}{3} + \frac{\epsilon}{288} \le \frac{17 \epsilon}{24}.
\]
Moreover, by setting $\lambda = [2 (S+1)]/\Lambda$ and using \eqnok{zero_def_N} and \eqnok{def_That}, we obtain
\beqas
\frac{24(n+4) \lambda}{T} \left[ L \bar{\cB}_{\hat N(\epsilon)} + \frac{(n+4) L^2 D_f^2}{\hat N(\epsilon)} + \sigma^2\right]
&\le& \frac{24(n+4) \lambda}{T} \left( \frac{\epsilon}{12} + \frac{\epsilon}{432} +\sigma^2\right)\\
&\le&  \frac{\epsilon}{12} + \frac{\epsilon}{432} + \frac{\epsilon}{6} \le \frac{7 \epsilon}{24}.
\eeqas
Using these two observations and relation \eqnok{2RSGF_conv1} with $\lambda = [2 (S+1)]/\Lambda$, we conclude that
\beqas
\prob\left\{\nabla f(\bar x^*)\|^2  \ge \epsilon \right\} &\le&
\prob\left\{
\|\nabla f(\bar x^*)\|^2 \ge  8 L \bar{\cB}_{\hat N(\epsilon)} + \frac{3(n+4) L^2 D_f^2}{2 \hat N(\epsilon)} \right. \\
&& \, + \left.\,\, \frac{24(n+4)\lambda}{T}
\left[L \bar{\cB}_{\hat N(\epsilon)} + \frac{(n+4) L^2 D_f^2}{\hat N(\epsilon)} + \sigma^2\right]
\right\} \\
&\le& \frac{S+1}{\lambda} + 2^{-S} = \Lambda.
\eeqas
\end{proof}

\vgap

Observe that in the view of \eqnok{def_S}, \eqnok{zero_def_N} and \eqnok{def_That},
the total number of calls to $\cal SZO$ performed by the $2$-RSGF method can be bounded by
\beq \label{zero_improved_compl}
{\cal O} \left\{
\frac{n  L^2 D_f^2
\log (1/\Lambda) }{\epsilon} + n L^2
 \left(\tilde D + \frac{D_f^2}{\tilde D}\right)^2 \frac{\sigma^2}{\epsilon^2} \log \frac{1}{\Lambda}
+ \frac{n \log^2(1/\Lambda)}{\Lambda} \left(1+ \frac{\sigma^2}{\epsilon}\right)
\right\}.
\eeq
The above bound is considerably smaller than the one in \eqnok{zero_nocvx_smooth_prob1},
up to a factor of
$
{\cal O}\left( 1/[\Lambda^2 \log(1/\Lambda)] \right),
$
when the second terms are the dominating ones in both bounds.

\section{Concluding remarks} \label{sec_concl}
In this paper, we present a class of new SA methods for solving the classical unconstrained NLP
problem with noisy first-order information. We establish a few new complexity results regarding
the computation of an $\epsilon$-solution for solving this class of problems
and show that they are nearly optimal whenever the problem is convex.
Moreover, we introduce a post-optimization phase in order to improve the large-deviation
properties of the RSG method. These
procedures, along with their complexity results, are then specialized for simulation-based optimization
problems when only stochastic zeroth-order information is available.
In addition, we show that the complexity for gradient-free methods
for smooth convex SP can have a much weaker dependence on the dimension $n$ 
than that for more general nonsmooth convex SP.




\bibliographystyle{plain}
\bibliography{../glan-bib}
\end{document}